\pgfplotsset{compat=1.18}
\tikzset{%
  dot/.style={circle, draw, fill=black, inner sep=0pt, minimum width=4pt},
  top/.style={anchor=south, inner sep=5pt},
}
\@date \else {\vskip1ex \centering\footnotesize\@date\par\vskip1ex}\fi
\else \@footnotetext{\@setdate}\fi}
\patchcmd{\section}{\scshape}{\bfseries}{}{}
\renewcommand{\@secnumfont}
\renewcommand\subsection{\@startsection{subsection}{2}%
  \z@{-.5\linespacing\@plus-.7\linespacing}{.5\linespacing}%
  {\normalfont\bfseries}}
\theoremstyle{plain}
\newtheorem{thm}{Theorem}[section]
\newtheorem*{thm*}{Theorem}
\newaliascnt{prop}{thm}
    \newtheorem{prop}[prop]{Proposition}
\newaliascnt{lem}{thm}
    \newtheorem{lem}[lem]{Lemma}
\newaliascnt{cor}{thm}
    \newtheorem{cor}[cor]{Corollary}
\newaliascnt{conj}{thm}
    \newtheorem{conj}[conj]{Conjecture}
\theoremstyle{definition}
\newaliascnt{exm}{thm}
    \newtheorem{exm}[exm]{Example}
\newaliascnt{dfn}{thm}
    \newtheorem{dfn}[dfn]{Definition}
\newaliascnt{rem}{thm}
    \newtheorem{rem}[rem]{Remark}
\numberwithin{equation}{section}
\NewDocumentCommand{\evalat}{sO{\big}mm}{%
\IfBooleanTF{#1}
{\mleft. #3 \mright|_{#4}}
{#3#2|_{#4}}%
}
\DeclareMathOperator{\tr}{tr}
\title{An Invariant for Transverse Coassociative 4-Folds}
\author{Dylan Galt}
\begin{document}

\date{\small \today}

\address{Department of Mathematics, Harvard University, Cambridge, MA 02138}
\email{dgalt@math.harvard.edu}

\begin{abstract}
We define a $\mathbb{Z}_2$-valued invariant for transversely-intersecting coassociative $4$-folds equipped with spin structures. Our main result shows this invariant provides an obstruction to separating two such coassociatives through a family of transverse coassociative deformations. We further prove that there is a canonical generalized connected sum of two such transverse coassociatives whose diffeomorphism type is determined by our invariant. When one coassociative is a graph over the other, we relate our invariant to the parity function in near-symplectic geometry. Finally, we discuss conjectural consequences for non-compactness phenomena and compute our invariant for the $\textup{Sp}(1)$-invariant coassociatives discovered by Harvey and Lawson.
\end{abstract}

\maketitle

\dottedcontents{section}[0em]{\bfseries}{0em}{1pc}
\dottedcontents{subsection}[0em]{}{-2em}{1pc}
\setcounter{tocdepth}{2}
\tableofcontents

\section{Introduction}
%\textbf{BACKGROUND ON CALIBRATED GEOMETRY.}
%\textbf{CONJECTURAL RELATION TO NON-COMPACTNESS PHENOMENA.}

The primary object of study in this paper is a special class of minimal submanifolds discovered by Harvey and Lawson called coassociative submanifolds \cite{HL82}. Coassociatives are $4$-dimensional calibrated submanifolds of $7$-manifolds possessing a $G_2$-structure and are of interest especially for the fundamental role they play in gauge theory on special holonomy spaces \cite{DS11}\cite{D17}. Many important questions about them can be formulated by analogy with special Lagrangian submanifolds of Calabi-Yau $3$-folds, and among these some of the most fundamental concern non-compactness phenomena. Singular degenerations of special Lagrangians have been studied extensively, including by Butscher \cite{B04}, Lee \cite{L03}, and Joyce \cite{J03}\cite{J03I}\cite{J04I}\cite{J04}\cite{J04II}, with one prototypical example of an isolated conical singularity given by the transverse intersection of two special Lagrangians. In contrast, the transverse intersection of two coassociatives $X_\pm$ in a $G_2$-manifold $Y$ is, if compact, a disjoint union of embedded circles in $Y$.

In a forthcoming paper \cite{G25}, we show under mild hypotheses that when $X_\pm$ are compact the singular coassociative $X_+\cup X_-\subset Y$ can be resolved and so in principle such singular degenerations can occur. A similar result for the non-compact $\textup{Sp}(1)$-invariant coassociatives in $\mathbb{R}^7$ discovered by Harvey and Lawson \cite{HL82} has been proven by Lotay and Kapouleas \cite{LK17}. In this paper, we study the question of when two transverse coassociatives can be separated and discuss the relation of our main results to such non-compactness phenomena. We explain how the separability of $X_\pm$ is related to the moduli space of deformations of the smooth coassociative $X$ resolving $X_+\cup X_-$ and conjecture the existence of a ``coassociative blow-down'' for $X$ (see \autoref{c4.3}). We also develop connections with near-symplectic geometry, emphasizing the perspective that two transverse coassociatives, when one is a graph over the other, behaves like a ``nonlinear'' analogue of a near-synplectic form. We hope this will stimulate research deepening the analogy.

\subsection{Main results} Let $Y$ be a smooth $7$-manifold equipped with a $G_2$-structure $\varphi$ and let $X_\pm\subset Y$ be coassociative submanifolds intersecting transversely with compact intersection $Z$. Our main theorem can be stated as follows.

\begin{thm}\label{t1.1}
If $X_\pm$ are equipped with spin structures, there is a map
\begin{equation*}
    \mu: \pi_0(Z)\rightarrow \{\pm 1\}
\end{equation*}
with the following properties:
\begin{enumerate}
    \item $\mu$ is invariant under coassociative deformations $(X_\pm(t))_{t\in [0,1]}$ that stay transverse for all $t\in [0,1]$ and oriented diffeomorphisms of $Y$ preserving $\varphi$.
    \item If a component $\Gamma\in \pi_0(Z)$ is locally removable through a family of coassociative deformations $(X_\pm(t))_{t\in [0,1]}$, then $\mu(\Gamma)=-1$.
\end{enumerate}
\end{thm}

We will make precise what is meant by locally removable, but suffice it to say for the moment that this means roughly \textit{removable along discs}. The main point of \autoref{t1.1} is that the map $\mu$ provides an obstruction to coassociative separability. After proving \autoref{t1.1}, we compute the invariant $\mu$ for the family of examples studied by Lotay and Kapouleas \cite{LK17} involving the $\textup{Sp}(1)$-invariant coassociative submanifolds of $\mathbb{R}^7$ constructed by Harvey and Lawson \cite{HL82}. The gluing constructions \cite{LK17}\cite{G25} produce smooth coassociatives diffeomorphic to the (generalized) connected sum $X_+\#_Z X_-\subset Y$ of $X_\pm$ along their intersection $Z$ and our second main result relates its topology to our invariant $\mu$.

\begin{thm}\label{t1.2}
There is a canonical generalized connected sum $X_+\#_Z X_-$ of $X_\pm$ and if $X_\pm$ are simply-connected and spin it is diffeomorphic to
\begin{equation*}
    X_+\# X_-\# W_\mu\#(k-1)(S^1\times S^3)
\end{equation*}
where $k=|\pi_0(Z)|$ and
\begin{equation*}
    W_\mu = \mu_{+}(S^2\times S^2)\#\mu_{-}(\mathbb{CP}^2\#\overline{\mathbb{CP}}^2), \;\;\; \mu_{\pm}=|\{\Gamma\in \pi_0(Z)\; : \; \mu(\Gamma)=\pm 1\}|.
\end{equation*}    
\end{thm}

We also discover that when $X_\pm$ are graphical near their intersection, the invariant $\mu$ admits a different description, inspired by the parity invariant in near-symplectic geometry \cite{P07}. In light of this, \autoref{t1.1} can be viewed as a sort of local analogue of Gompf's theorem (c.f. Theorem 1.8 in \cite{P07}) on the number (modulo two) of ``even'' zero circles of a near-symplectic form on a closed, oriented $4$-manifold. Finally, we explain why it is likely not possible in general to attach signs to the intersection circles of $X_\pm$ in a meaningful way without extra assumptions on their geometry (see \autoref{r2.18} and \autoref{t3.11}).
\vspace{2mm}

\noindent
\textbf{Acknowledgements.} I am grateful to my Ph.D. supervisor, Simon Donaldson, for suggesting this problem and sharing generously his ideas, especially with regard to \autoref{c4.3}. Thank you to Olga Plamenevskaya, Arunima Ray, Dennis Sullivan, and Yunpeng Niu for useful conversations. I am especially grateful to Jason Lotay for his 2017 lecture ``Invariant Coassociative 4-folds via Gluing'' about joint work with Nikolaos Kapouleas, from which I learned many aspects of coassociative geometry relevant to this paper. 
\vspace{2mm}

\noindent
\textit{This work was partially supported by the Simons Collaboration on Special Holonomy in Geometry, Analysis, and Physics and by National Science Foundation DMS grant 2503365. This article comprised part of the author's Ph.D. thesis.}

\section{The Invariant}\label{s02}
In \autoref{s2.1} we provide background material in $G_2$ geometry. In \autoref{s2.2} we define our invariant and prove \autoref{t1.2}. We describe our main motivating class of examples in \autoref{s2.4} and then prove our main result \autoref{t1.1} in \autoref{s2.3}. In \autoref{s02.5} we study our invariant for graphical coassociatives and relate it to near-symplectic geometry.

\subsection{$G_2$-manifolds and coassociative submanifolds}\label{s2.1}
Let $\mathbb{O}$ denote the octonions, which form a normed division algebra over $\mathbb{R}$. The imaginary octonions $\textup{Im}(\mathbb{O})$ form a seven-dimensional real vector space and octonionic multiplication induces a cross product \cite{HL82} on $\textup{Im}(\mathbb{O})$ given by $x\times y := \textup{Im}(\bar{y}x)$. Using this, one can define an alternating, trilinear form
\begin{equation}\label{e2.1}
    \varphi_0(x,y,z) :=\left< x,y\times z\right>.
\end{equation}
The subgroup of $\textup{GL}_7(\mathbb{R})$ stabilizing $\varphi_0$ is isomorphic to $G_2$, the compact real form of the Lie group corresponding to the exceptional complex Lie algebra $\mathfrak{g}_2$ \cite{B05}. The $\textup{GL}_7(\mathbb{R})$ orbit of $\varphi_0$ in $\Lambda^3(\mathbb{R}^7)^\ast$ is an open set \cite{B05}, denoted $\Lambda^3_+(\mathbb{R}^7)^\ast$, and on a $7$-dimensional smooth manifold $Y$ we have a corresponding subbundle $\Lambda^3_+T^\ast Y$ of $\Lambda^3T^\ast Y$.

\begin{dfn}
We say that $Y$ is a $G_2$-manifold if there is a section $\varphi\in \Omega^3_+(Y)$. In this case, we call $\varphi$ a $G_2$-structure on $Y$.
\end{dfn}

The term $G_2$-manifold is sometimes reserved for a $7$-manifold equipped with a torsion-free $G_2$-structure. We caution the reader that this is not our usage of the term. The existence of such a 3-form $\varphi$ is equivalent \cite{B05} to a reduction of the structure group of the frame bundle of $Y$ to $G_2$. Since $G_2\subset \textup{SO}(7)$, a $G_2$-structure $\varphi$ on $Y$ determines a Riemannian metric $g_\varphi$ and thus a dual $4$-form $\ast\varphi\in \Omega^4(Y)$. If $\varphi$ is co-closed then $\ast \varphi$ is a calibration in the sense of Harvey and Lawson \cite{HL82}. An orientable 4-dimensional submanifold $X\subset Y$ is calibrated by $\ast\varphi$ (up to a choice of orientation of $X$) if and only if $\varphi|_X\equiv 0$ \cite{J00}. Thus, we adopt a slightly non-standard weakening of the definition of coassociative in \cite{HL82}.

\begin{dfn}\label{d2.7}
We call an oriented submanifold $X^4\subset Y$ coassociative if $\varphi|_X\equiv 0$.
\end{dfn}

This condition is overdetermined and one must in general require that $\varphi$ be closed in order for coassociatives to exist even locally \cite{J00}. However, we will use in this paper only the consequences of the isotropic condition in \autoref{d2.7} and thus will neither require that $\varphi$ be closed nor co-closed.

\subsection{Spin structures and generalized connected sum}\label{s2.2}
Let $X_\pm$ be two transversely-intersecting coassociative submanifolds of a $G_2$-manifold $(Y,\varphi)$ with compact intersection $Z$. Let $N$ denote the normal bundle to $Z$ in $Y$ and let $N_\pm$ denote the normal bundle to $Z$ in $X_\pm$. We can form the (generalized) connected sum $X_+\#_Z X_-$ of $X_+$ and $X_-$ along $Z$ as in \cite{K07}. First, remove tubular neighborhoods $T_\pm$ of $Z$ in $X_+$ and $X_-$. Then, fix an isomorphism $h$ between $N_+$ and $N_-$ that reverses orientation on the fibres. Finally, form the space $X_+\#_Z X_-$ by identifying elements of $T_+\setminus Z$ with elements of $T_-\setminus Z$ via $h$. Note that this is naturally a submanifold of $Y$. The diffeomorphism type of $X_+\#_Z X_-$ depends on the choice of $h$, and in general there are two such choices because $\pi_1(\textup{SO}(3))=\mathbb{Z}_2$, but in our case we have the following.

\begin{lem}\label{l03.1}
The $G_2$-structure $\varphi$ determines an isomorphism between $N_+$ and $N_-$ and thus a canonical generalized connected sum $X_+\#_Z X_-$.
\end{lem}
\begin{proof}
Since $X_+$ and $X_-$ intersect transversely we have a splitting
\begin{equation*}
    \Lambda^2 N^\ast=\Lambda^2 N^\ast_+\oplus (N^\ast_+\otimes N^\ast_-)\oplus\Lambda^2 N^\ast_-.
\end{equation*}
Since $X_+$ and $X_-$ are coassociative, the components of the section $\varphi||_Z\;\lrcorner\; u\in \Lambda^3 T^\ast Y|_Z$ in $\Lambda^2 N^\ast_\pm$ vanish, so that
\begin{equation*}
    \varphi||_Z\;\lrcorner\; u\in \Gamma(Z,N^\ast_+\otimes N^\ast_-)
\end{equation*}
where the symbol $||$ is used to denote the restriction of $\varphi$ as a bundle section. Because $\varphi$ is strongly nondegenerate \cite{MS10}, this section induces a bundle isomorphism between $N_+$ and $N_-$. The isomorphism determined by $-(\varphi||_Z\;\lrcorner\; u)$ is fibrewise orientation-reversing and so determines a canonical generalized connected sum $X_+\#_Z X_-$.
\end{proof}

We now explain how to define our invariant, using the isomorphism between $N_\pm$ given by \autoref{l03.1}. Recall that by a framing of a vector bundle one means a choice of trivialization up to homotopy. On an oriented vector bundle over the circle of rank at least $3$, framings are in bijection with spin structures \cite{KT91}. If $X_+$ is spin, the spin structure on $TX_+$ induces a spin structure on $TX_+|_Z$, which in turn induces a preferred framing. We have the orthogonal splitting $TX_+|_Z = TZ\oplus N_+$ using the metric $g_\varphi$ to regard the normal bundle $N_+$ as a subbundle of $TX_+|_Z$. Since $Z$ is oriented, the framing for $TX_+|_Z$ induces a framing for $N_+$. Likewise, a spin structure on $X_-$ induces a framing for $N_-$. 

%If $X_\pm$ are both simply-connected, their spin structures are unique up to isomorphism and so this construction gives canonical choices of framings for the bundles $N_\pm$.

%\begin{rem}
%Equivalently, we could have said that a spin structure on $X_\pm$ induces a spin structure on $N_\pm$ by taking the ``canonical'' spin structure on $TZ$ (i.e., the one induced by the orientation of $Z$) and using the splitting \autoref{e02.2}.
%\end{rem}

\begin{dfn}\label{d4.7}
Given spin structures on $X_\pm$, define
\begin{equation*}
    \mu_{X_+,X_-}:\pi_0(Z)\rightarrow\mathbb{Z}_2
\end{equation*}
by setting $\mu_{X_+,X_-}(\Gamma)=+1$ if the framings for $N_\pm|_\Gamma$ agree when compared by the isomorphism between $N_\pm$ determined by the $G_2$-structure. Otherwise, set $\mu_{X_+,X_-}(\Gamma)=-1$.
\end{dfn}

The map $\mu_{X_+,X_-}$ in \autoref{d4.7} is an invariant of the pair $X_\pm$ in the following sense.

\begin{prop}\label{p02.6}
The map $\mu_{X_+,X_-}$ satisfies the following properties:
\begin{enumerate}
    \item If $f:Y\rightarrow Y$ is an oriented diffeomorphism with $f^\ast\varphi=\varphi$, then we have for all $\Gamma\in \pi_0(Z)$ that
    \begin{equation*}
    \mu_{f(X_+),f(X_-)}(f(\Gamma))=\mu_{X_+,X_-}(\Gamma)
    \end{equation*}
    \item If $\varphi_t$ is a smooth family of $G_2$-structures on $Y$ and $F_\pm : X_\pm \times [0,1]\rightarrow Y$ is a smooth family of deformations such that $X_\pm(t)=F_\pm(X_\pm\times\{t\})$ are coassociative with respect to $\varphi_t$ and intersect transversely for all $t\in [0,1]$ then
    \begin{equation*}
        \mu_{X_+(0),X_-(0)}(\Gamma)=\mu_{X_+(t),X_-(t)}(F(\Gamma\times\{t\}))
    \end{equation*}
    for all $\Gamma\in \pi_0(X_+(0)\cap X_-(0))$ and $t\in [0,1]$.
\end{enumerate}
\end{prop}
\begin{proof}
A diffeomorphism $f:Y\rightarrow Y$ will induce an isomorphism between the normal bundles $N_{\Gamma/X_\pm}$ and $N_{f(\Gamma)/f(X_\pm)}$, and if $f^\ast\varphi=\varphi$ then this will commute with the isomorphism determined by the $G_2$-structure, which is used to define the map $\mu$. This justifies $(1)$. The map $F_\pm$ induces a homotopy between the spin framings for $N_\pm(0)$ and $N_\pm(t)$, which gives $(2)$.
\end{proof}

The following result characterizes the diffeomorphism type of the canonical generalized connected sum $X_+\#_Z X_-\subset Y$ from \autoref{l03.1} in terms of the invariant $\mu_{X_+,X_-}$.

\begin{thm}\label{t4.11}
Suppose that $X_+$ and $X_-$ are simply-connected and spin. The generalized connected sum $X_+\#_Z X_-$ is diffeomorphic to 
\begin{equation*}
    X_+\# X_-\# W_\mu\#(k-1)(S^1\times S^3)
\end{equation*}
where $k=|\pi_0(Z)|$ and
\begin{equation*}
    W_\mu = \mu_{+}(S^2\times S^2)\#\mu_{-}(\mathbb{CP}^2\#\overline{\mathbb{CP}}^2), \;\;\; \mu_{\pm}=|\{\Gamma\in \pi_0(Z)\; : \; \mu_{X_+,X_-}(\Gamma)=\pm 1\}|.
\end{equation*}
\end{thm}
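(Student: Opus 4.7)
The plan is to reduce the diffeomorphism computation to a local model near $Z$ and then induct on $k=|\pi_0(Z)|$. For brevity set $V_+:=S^2\times S^2$ and $V_-:=\mathbb{CP}^2\#\overline{\mathbb{CP}}^2$, so that $W_\mu=\#_{\Gamma\in\pi_0(Z)}V_{\mu(\Gamma)}$.

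\textbf{Local model and base case.} First I would compute $S^4\#_\Gamma S^4$ for an unknot $\Gamma\subset S^4$. The genus-one decomposition $S^4=(S^1\times D^3)\cup_{S^1\times S^2}(D^2\times S^2)$ gives $S^4\setminus\nu(\Gamma)\cong D^2\times S^2$, so $S^4\#_\Gamma S^4$ is the double of $D^2\times S^2$ along its $S^1\times S^2$ boundary. The two homotopy classes of such gluings, detected by $\pi_1(\textup{SO}(3))=\mathbb{Z}_2$, realize the two oriented $S^2$-bundles over $S^2$, namely $V_+$ and $V_-$; I would match these with $\mu(\Gamma)=\pm 1$ by tracing how the discrepancy of the canonical spin framings on $N_+$ and $N_-$ translates into the $\pi_1(\textup{SO}(3))$ twist in the gluing. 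For the base case $k=1$, since $X_\pm$ are simply-connected the circle $\Gamma$ is isotopic to an unknot inside a 4-ball $B_\pm\subset X_\pm$, and the decomposition $X_\pm\setminus\nu(\Gamma)\cong(X_\pm\setminus D^4)\cup_{S^3}(D^2\times S^2\setminus D^4)$ rearranges $X_+\#_\Gamma X_-$ as $X_+\#X_-\#(S^4\#_\Gamma S^4)\cong X_+\#X_-\#V_{\mu(\Gamma)}$.

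\textbf{Inductive step.} Assume the theorem for $k-1$ circles and set $Z'=Z\setminus\Gamma_k$, $M=X_+\#_{Z'}X_-$. In $M$ the circle $\Gamma_k$ appears as two disjoint null-homotopic circles $\Gamma_k^\pm$, one on each side of the $Z'$-gluing, and $X_+\#_Z X_-$ is obtained from $M$ by the ``circle sum'' that removes $\nu(\Gamma_k^+)\sqcup\nu(\Gamma_k^-)$ and identifies the two resulting $S^1\times S^2$ boundaries. Because the $\Gamma_k^\pm$ together with a joining arc can be isotoped into a 4-ball $B'\subset M$, this operation is supported in $B'$, and its effect on $M$ is connected sum with a closed local model $\widetilde N$: the same circle sum performed on a pair of disjoint unknots in $S^4$. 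I expect $\widetilde N\cong V_{\mu(\Gamma_k)}\#(S^1\times S^3)$, where $V_{\mu(\Gamma_k)}$ captures the framing twist at $\Gamma_k$ exactly as in the base case, and the $S^1\times S^3$ factor encodes the new cycle created by identifying two previously disjoint boundary components (analogous to how the double-basepoint connected sum of a manifold with itself contributes an $S^1\times S^{n-1}$ summand). Combining with the inductive hypothesis yields the stated formula.

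\textbf{Main obstacle.} The hardest step is establishing $\widetilde N\cong V_{\mu(\Gamma_k)}\#(S^1\times S^3)$. I would attack this by an explicit handle decomposition: writing $S^4$ minus two disjoint unknotted tubular neighborhoods as two copies of $D^2\times S^2\setminus D^4$ joined through a cylindrical $S^3\times I$, performing the identification of the two $S^1\times S^2$ boundaries, and then separating the framing twist (contributing $V_{\mu(\Gamma_k)}$) from the joining cycle (contributing $S^1\times S^3$). As a consistency check, iterated Van Kampen -- using that $X_\pm\setminus\nu(Z)$ is simply-connected because meridional loops of a 1-submanifold in a 4-manifold bound disks in the transverse $S^2$ -- gives $\pi_1(X_+\#_Z X_-)=F_{k-1}$, matching the fundamental group of $X_+\#X_-\#W_\mu\#(k-1)(S^1\times S^3)$.
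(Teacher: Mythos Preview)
Your proposal is correct and follows essentially the same strategy as the paper: reduce to the local model $S^4\#_{S^1}S^4$ (which the paper identifies as an $S^2$-bundle over $S^2$ via a 5-dimensional 2-handle argument rather than your doubling of $D^2\times S^2$, and which uses an explicit finger-move argument to put each $\Gamma$ on an embedded disk where you simply assert the unknotting), then induct, picking up one $V_{\mu(\Gamma_j)}$ and one $S^1\times S^3$ per additional component. Your ``main obstacle'' $\widetilde N\cong V_{\mu(\Gamma_k)}\#(S^1\times S^3)$ is exactly what the paper establishes (more tersely) by observing that the circle sum at $\Gamma_k$ amounts to a two-point connected sum of $X_+\#_{Z'}X_-$ with $S^4\#_{\Gamma_k}S^4$ followed by a 1-handle attachment, i.e.\ a further $\#(S^1\times S^3)$.
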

\begin{proof}
Suppose $Z$ is connected. Since $X_+$ is simply-connected, $Z$ bounds an immersed disc with only isolated ordinary double point singularities in its interior. Since $X_+$ has dimension 4 and no boundary, we can use a sequence of finger moves to push the self-intersection points off the disc. The double points can be connected to the boundary of the disc by pairwise disjoint arcs. At each double point, we have two branches of the disc, and we can push one branch along the corresponding arc until the self-intersection point is removed. Likewise for $X_-$. Shrinking these embedded discs radially, we can assume $Z$ is contained in standard 4-balls in $X_\pm$. Thus, $X_\pm\cong X_\pm\# S^4$ with $Z=S^1$ contained in these copies of $S^4$, so that $X_+\#_Z X_-$ is diffeomorphic to $X_+\# X_-\# (S^4\#_{S^1}S^4)$.

We now show that $S^4\#_{S^1}S^4$ is diffeomorphic either to $S^2\times S^2$ or the nontrivial $S^2$ bundle over $S^2$ (which is diffeomorphic to $\mathbb{CP}^2\#\overline{\mathbb{CP}}^2$), depending on the parity of $Z$. View each copy of $S^4$ as the boundary of a copy of $D^5$. We can view the second 5-ball $D^5=D^2\times D^3$ as a 2-handle that we attach to the boundary of the first $D^5$ along $S^1$, giving a manifold $W$ with $\partial W=S^4\#_{S^1}S^4$. This handle attachment requires an identification of $(\partial D^2)\times D^3$, the framing coming from the framing of $S^1$ in the second copy of $S^4=\partial (D^2\times D^3)$, with the normal bundle of $S^1$ in the first copy of $S^4$. This identification is the relative framing determined by the $G_2$-structure, as in \autoref{l03.1}. Our embedded discs glue together to give an embedded $S^2$ with equator the original $S^1$ and $W$ can be viewed as a $D^3$ bundle over this $S^2$ (c.f. Corollary 6.5 in \cite{K07} and \autoref{f7} below). Thus, $\partial W=S^4\#_{S^1}S^4$ is an $S^2$ bundle over $S^2$, of which there are two diffeomorphism types, distinguished by the parity of the relative framing, which is by definition the parity of the invariant $\mu$.

If $Z$ is disconnected, we continue inductively for the rest of its components, observing that the generalized connected sum $X_+\#_{\Gamma_1\sqcup\Gamma_2}X_-$ along two components $\Gamma_1$ and $\Gamma_2$ of $Z$ can be obtained from $(X_+\#_{\Gamma_1} X_-)$, the key point being that removing a neighborhood of $\Gamma_2$ from $(X_+\#_{\Gamma_1} X_-)$ leaves a connected manifold. Indeed, we can obtain $X_+\#_{\Gamma_1\sqcup\Gamma_2}X_-$ by taking the connected sum of $(X_+\#_{\Gamma_1} X_-)=(X_+\#_{\Gamma_1} X_-)\# S^4$ along $\Gamma_2\subset S^4$ with another copy of $S^4$ to obtain $(X_+\#_{\Gamma_1} X_-)\# (S^4\#_{\Gamma_2} S^4)$, then removing two points from this manifold and attaching a 1-handle to the corresponding manifold with boundary. Attaching this 1-handle is the same as taking the connected sum of $(X_+\#_{\Gamma_1} X_-)\# (S^4\#_{\Gamma_2} S^4)$ with $S^1\times S^3$. Proceeding in this way gives the result.
\end{proof}

The construction described in the proof of \autoref{t4.11} is depicted below in \autoref{f7} in the case when $X_\pm = S^4$ and $Z=S^1$.

\begin{figure}[h]
\centering
\begin{tikzpicture}[scale=0.9, decoration = {markings}]
  \draw (0,0) circle [radius=2];
  \draw[dashed] plot [smooth, tension=2] coordinates { (0,2) (3,3) (2,0)};
  \draw plot [smooth, tension=2] coordinates { (-1,1.75) (3.5,3.5) (1.75,-1)};
  \draw plot [smooth, tension=2] coordinates { (0.55,1.92) (2.4,2.4) (1.93,0.5)};
  \draw[dashed] plot [smooth, tension=1] coordinates { (0,2) (0.25,0.25) (2,0)};
  \node at (2,0) {\(\bullet\)};
  \node at (0,2) {\(\bullet\)};
  \node[label=left: ${\partial D^5=S^4_-}$] at (-1,-2) {};
  \node[label=left: ${\partial (D^2\times D^3)=S^4_+}$] at (7.5,1) {};
\end{tikzpicture}   
\caption[Constructing $S^4\#_{S^1}S^4$ via handle attachment]{Constructing $S^4\#_{S^1}S^4$ via a handle attachment. The dashed lines represent discs $D^2_\pm$ bounding the $S^1\subset S^4_\pm$, which have been pushed into the respective $5$-balls. The generalized connected sum $S^4\#_{S^1}S^4$ is the boundary of this picture, hence the boundary of a $D^3$ bundle over the $S^2$ formed by the two dashed lines.}
\label{f7}
\end{figure}

By a well-known result of Wall, $(S^2\times S^2) \# \mathbb{CP}^2$ and $(\mathbb{CP}^2\#\overline{\mathbb{CP}}^2)\#\mathbb{CP}^2$ are diffeomorphic. Using this, we obtain the following corollary of \autoref{t4.11}.

\begin{cor}
If $\mu_-\geq 1$, then $X_+\#_Z X_-$ is diffeomorphic to
\begin{equation*}
    X_+\# X_- \# k(\mathbb{CP}^2\#\overline{\mathbb{CP}}^2)\#(k-1)(S^1\times S^3)
\end{equation*}
\end{cor}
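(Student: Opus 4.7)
The plan is to deduce this as a purely topological consequence of \autoref{t4.11} combined with Wall's identity $(S^2\times S^2)\#\mathbb{CP}^2 \cong (\mathbb{CP}^2\#\overline{\mathbb{CP}}^2)\#\mathbb{CP}^2$ quoted in the preceding paragraph. Since \autoref{t4.11} already identifies $X_+\#_Z X_-$ with $X_+\# X_-\# W_\mu\#(k-1)(S^1\times S^3)$, the only thing to prove is that $W_\mu \cong k(\mathbb{CP}^2\#\overline{\mathbb{CP}}^2)$ whenever $\mu_- \geq 1$.

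The first step is to rewrite Wall's identity in a form better suited to iteration. Taking connected sum of both sides with a single $\overline{\mathbb{CP}}^2$ yields
\begin{equation*}
    (S^2\times S^2)\#(\mathbb{CP}^2\#\overline{\mathbb{CP}}^2) \;\cong\; 2(\mathbb{CP}^2\#\overline{\mathbb{CP}}^2).
\end{equation*}
I would read this as an exchange rule: provided at least one $\mathbb{CP}^2\#\overline{\mathbb{CP}}^2$ summand is present, a copy of $S^2\times S^2$ can be traded for a copy of $\mathbb{CP}^2\#\overline{\mathbb{CP}}^2$, and the number of $\mathbb{CP}^2\#\overline{\mathbb{CP}}^2$ summands available for future trades strictly increases.

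The second step is the induction. Starting from $W_\mu = \mu_+(S^2\times S^2)\#\mu_-(\mathbb{CP}^2\#\overline{\mathbb{CP}}^2)$ with $\mu_- \geq 1$, a single application of the exchange rule gives
\begin{equation*}
    W_\mu \;\cong\; (\mu_+-1)(S^2\times S^2)\#(\mu_-+1)(\mathbb{CP}^2\#\overline{\mathbb{CP}}^2),
\end{equation*}
and the hypothesis needed for the rule is preserved. Iterating $\mu_+$ times yields $W_\mu \cong (\mu_++\mu_-)(\mathbb{CP}^2\#\overline{\mathbb{CP}}^2) = k(\mathbb{CP}^2\#\overline{\mathbb{CP}}^2)$. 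Substituting into the conclusion of \autoref{t4.11} produces the claimed diffeomorphism.

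There is no substantive obstacle; the whole content is Wall's identity together with bookkeeping. The hypothesis $\mu_- \geq 1$ is needed precisely to seed the induction: $\mu_+(S^2\times S^2)$ has even intersection form while $k(\mathbb{CP}^2\#\overline{\mathbb{CP}}^2)$ has odd intersection form, so the two cannot be diffeomorphic without at least one $\mathbb{CP}^2\#\overline{\mathbb{CP}}^2$ summand to begin with.
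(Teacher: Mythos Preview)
Your proposal is correct and follows exactly the approach the paper intends: the paper states the corollary as an immediate consequence of \autoref{t4.11} together with Wall's identity, and you have simply written out the (straightforward) details of that deduction. There is nothing to add.
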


In the next section, we describe in detail our main motivating examples of transversely-intersecting coassociatives.

\subsection{The Harvey-Lawson coassociatives}\label{s2.4}
In their original paper \cite{HL82} on calibrated geometry, Harvey and Lawson constructed two different topological types of $\textup{Sp}(1)$-invariant coassociative submanifolds of $\textup{Im}(\mathbb{O})$. The author learned from a lecture \cite{LK17} of Lotay, concerning joint work of Lotay and Kapouleas, that these can be used to construct examples of transversely-intersecting coassociatives. We now describe these examples in detail, following \cite{LK17}.

Recall the standard $G_2$-structure $\varphi_0$ on $\textup{Im}(\mathbb{O})$ given in \autoref{e2.1}. Consider the splitting $\mathbb{O}=\mathbb{H}\oplus\mathbb{H}e$ so that $\textup{Im}(\mathbb{O})=\textup{Im}(\mathbb{H})\oplus\mathbb{H}e$. Take the copy of $\textup{Sp}(1)$ given by the unit quaternions $\textup{Sp}(1)=S^3\subset \mathbb{H}$ in the first copy of $\mathbb{H}$ with respect to our splitting. This acts on $\textup{Im}(\mathbb{O})=\textup{Im}(\mathbb{H})\oplus\mathbb{H}e$ by
\begin{equation}\label{e5.1}
    Q: \textup{Sp}(1)\times \textup{Im}(\mathbb{O})\rightarrow \textup{Im}(\mathbb{O}), \;\;\; q\cdot (x,ye)=(qx\bar{q},y\bar{q}e),
\end{equation}
preserving $\varphi_0$ and acting on $\textup{Im}(\mathbb{H})$ via the double cover $\textup{SU}(2)\rightarrow\textup{SO}(3)$. Define the following parameter space
\begin{equation}
    \mathcal{C}:=\{(\epsilon,c)\in \textup{Im}(\mathbb{H})\times\mathbb{R}_{\geq 0}\;\; :\;\; |\epsilon|=1\}
\end{equation}
and denote by $\mathcal{C}^+\subset\mathcal{C}$ the subspace where $c>0$. Harvey and Lawson use $\textup{Sp}(1)$ symmetry to reduce the coassociative condition to an ODE, which they then solve to obtain the following result.

\begin{thm}[\cite{HL82}, Thm. 3.2]
For each $(\epsilon,c)\in\mathcal{C}$ there is a corresponding coassociative submanifold $ \mathbf{M}_{\epsilon,c}\subset\textup{Im}(\mathbb{O})$ given by
\begin{equation*}
     \mathbf{M}_{\epsilon,c}:=\{(sq\epsilon\bar{q},r\bar{q}e)\;\;|\;\; q\in\textup{Sp}(1),\; s(4s^2-5r^2)^2=c, \;(s,r)\in\mathbb{R}_{>0}\times\mathbb{R}_{>0}\}.
\end{equation*}
\end{thm}

For fixed $(\epsilon,c)\in\mathcal{C}$ with $c>0$, there are two branches of solutions to the equation $s(4s^2-5r^2)^2=c$ depending on the sign of $4s^2-5r^2$. Thus, $ \mathbf{M}_{\epsilon,c}$ is the disjoint union of two connected coassociative submanifolds of $\textup{Im}(\mathbb{O})$, which we denote by
\begin{equation*}
    \mathbf{M}_{\epsilon,c}=M^+_{\epsilon,c}\cup M^-_{\epsilon,c}.
\end{equation*}
For $c>0$, the coassociative $M^+_{\epsilon,c}$ is diffeomorphic to the total space of the $\mathcal{O}(-1)$ line bundle over $\mathbb{P}^1$, while $M^-_{\epsilon,c}$ is diffeomorphic to $\mathbb{R}_{>0}\times S^3$ \cite{LK17}. In particular, for any $(\epsilon,c)\in\mathcal{C}^+$ the coassociative $M^-_{\epsilon,c}$ is spin while $M^+_{\epsilon,c}$ is not. When $c=0$, we get three branches of solutions: the coassociative 4-plane corresponding to $s=0$, which we denote by $H_0=\{0\}\oplus\mathbb{H}e$, and the so-called Lawson-Osserman cone \cite{LO77} given by
\begin{equation*}
    M^\pm_{\epsilon,0}:=\{(\pm\frac{\sqrt{5}}{2}rq\epsilon\bar{q},r\bar{q}e)\; | \; q\in \textup{Sp}(1),\; r\in \mathbb{R}_{>0}\}.
\end{equation*}

\begin{rem}\label{l02.10}
For $(\epsilon,c)\in \mathcal{C}^+$, the coassociative $M^+_{\epsilon,c}$ is asymptotically conical with one end while $M^-_{\epsilon,c}$ has one conical end and one flat end (c.f. \autoref{f4.3} below) \cite{LK17}. The asymptotic cone of $M^+_{\epsilon,c}$ is $M^+_{\epsilon,0}$ while the asymptotic cone of $M^-_{\epsilon,0}$ is $H_0\cup M^-_{\epsilon,0}$, which follows from the fact that rescaling $M^\pm_{\epsilon,c}$ by a factor of $\lambda>0$ gives $\lambda\cdot M^\pm_{\epsilon,c}=M^\pm_{\epsilon,\lambda^5c}$ \cite{L090}.
\end{rem}

%The ends of $M^\pm_{\epsilon,c}$ asymptotic to $M^\pm_{\epsilon,0}$ are asymptotic at rate $-3/2$, while the flat end of $M^-_{\epsilon,c}$ is asymptotic to $H_0$ at rate $-4$ \cite{L090}.

Following \cite{LK17}, our examples of transversely-intersecting coassociatives will come by intersecting $\mathbf{M}_{\epsilon,c}$ with coassociative $4$-planes given by the translates $H_{s\epsilon}:=s\epsilon\oplus\mathbb{H}e$. We now give a proof of the following observation, which the author learned from \cite{LK17}.

\begin{lem}\label{l3.8}
Take any $(\epsilon,c)\in\mathcal{C}^+$. For $s>s_0(c):==(c/16)^{1/5}$, the coassociatives $M^+_{\epsilon,c}$ and $H_{s\epsilon}$ intersect transversely in a circle; for $s=s_0(c)$, they intersect non-transversely in a single point; and for $s<s_0(c)$ they are disjoint. The coassociatives $M^-_{\epsilon,c}$ and $H_{s\epsilon}$ intersect transversely in a circle for $s>0$ and are disjoint for $s\leq 0$.
\end{lem}
\begin{proof}
The calculation is the same for any $\epsilon$ and so we fix $\epsilon=e_2$. Given $q\in\textup{Sp}(1)$, computation shows that $qe_2\bar{q}=e_2$ if and only if $q=x_1e_1+x_2e_2$ with $x^2_1+x^2_2=1$. In particular, we see that
\begin{equation*}
    M^+_{e_2,c}\cap H_{se_2}=\{(se_2,r\bar{q}e)\;\;|\;\;q=x_1e_1+x_2e_2,\; x^2_1+x^2_2=1,s(4s^2-5r^2)^2=c\;\}.
\end{equation*}
For $M^+_{e_2,c}$ and fixed $s>0$ we are considering solutions $r$ to $5r^2=4s^2-\sqrt{c/s}$. There is a unique solution $r>0$ when $c<16s^5$. When $c=16s^5$ we must have $r=0$ to solve this equation, in which case we can see that $M^+_{e_2,c}$ and $H_{se_2}$ intersect only at $(se_2,0\cdot e)$ for $s$ solving $c=16s^5$. For $c>16s^5$ there are no solutions and so the coassociatives do not intersect. For $M^-_{e_2,c}$, we are considering solutions $r$ to $5r^2=4s^2+\sqrt{c/s}$, which exist if and only if $s>0$.
\end{proof}

The coassociatives $M^\pm_{\epsilon,c}$ can also be viewed as the total spaces of fibre bundles over the unit sphere $S^2\subset \textup{Im}(\mathbb{H})$, where the surface fibres are diffeomorphic to
\begin{equation*}
    \Sigma^+=\mathbb{C}\;\;\;\textup{and}\;\;\;\Sigma^-=\mathbb{R}_+\times S^1,
\end{equation*}
respectively, and can be viewed as surfaces of revolution over the corresponding profile curves in the $(s,r)$ parameter space \cite{F07}. Indeed, consider the projection
\begin{equation*}
    \pi: M_{\epsilon,c}\rightarrow\textup{Im}(\mathbb{H}), \;\;\; \pi(x,y)=x/|x|.
\end{equation*}

Since $\textup{SO}(3)$ acts transitively on the unit sphere in $\textup{Im}(\mathbb{H})$, for any unit vector $\epsilon'\in\textup{Im}(\mathbb{H})$ we can find some $q_0\in\textup{Sp}(1)$ such that $q_0\epsilon\bar{q}_0=\epsilon'$, so $\pi^{-1}(\epsilon')$ is given by
\begin{equation*}
    \{(sq\epsilon\bar{q},r\bar{q}e)\;\;|\;\; q\in\textup{Stab}_{\textup{Sp}(1)}(\epsilon')\cdot q_0, \;\; s(4s^2-5r^2)^2=c\}.
\end{equation*}
The stabilizer of $\epsilon'$ is isomorphic to $U(1)$. In the first case we get a family of circles for all $s>s_0(c)$ foliating $\Sigma^+$ except for the extra point, corresponding to $s=s_0(c)$, which is $(s_0(c)\epsilon',0\cdot e)$. In the second case, we get a 1-parameter family of circles foliating $\Sigma^-$ whose diameters go to infinity as $s\rightarrow +\infty$ and likewise as $s\rightarrow 0$. In both cases, these are precisely the intersection circles $M^\pm_{\epsilon,c}\cap H_{s\epsilon}$, as depicted in \autoref{f4.3} below.
\vspace{2mm}

\begin{figure}[h]
\begin{center}
\begin{tikzpicture}[decoration = {markings}]
\draw plot [smooth, tension=1.5] coordinates { (-1.5,2.5) (0,0) (1.5,2.5)};
\draw plot [smooth, tension =1.5] coordinates { (4,2.5) (5,1) (2.5,0) };
\draw plot [smooth, tension =1.5] coordinates { (8,2.5) (7,1) (9.5,0) };
    \node[label=left: $\Sigma^+$] at (-1.5, 2) {};
    \node[label=left: $\Sigma^-$] at (9,0.5) {};
\draw plot [smooth, tension =1] coordinates { (-1.25,1.39) (0,1) (1.25,1.39)};
\draw[dashed] plot [smooth, tension=1] coordinates { (-1.25,1.39) (0,1.5) (1.25,1.39) };
\draw plot [smooth, tension =1] coordinates { (5.13,1.5) (6,1.25) (6.85,1.5)};
\draw[dashed] plot [smooth, tension =1] coordinates { (5.13,1.5) (6,1.7) (6.85,1.5)};
\end{tikzpicture}   
\end{center}
\caption[Harvey-Lawson $\textup{Sp}(1)$-invariant coassociatives in $\mathbb{R}^7$]{The fibres $\Sigma^\pm$ with the intersection circles of $M^\pm_{\epsilon,c}$ and $H_{s\epsilon}$ depicted. The intersection circle bounds a disc in the fibre $\Sigma^+$.}
\label{f4.3}
\end{figure}

We now compute our invariant $\mu$ for the spin coassociatives $M^-_{\epsilon,c}$ and $H_{s\epsilon}$.

\begin{prop}\label{p02.11}
For any $(\epsilon,c)\in \mathcal{C}^+$ and $s>0$, we have
\begin{equation*}
    \mu(M^-_{\epsilon,c}\cap H_{s,\epsilon})=-1.
\end{equation*}
\end{prop}
\begin{proof}
Let $\Gamma$ denote the curve in the $(s,r)$-parameter space corresponding to $M^-_{\epsilon,c}$. Consider the smooth map $\pi: M^-_{\epsilon,c}\rightarrow \Gamma$ given by $\pi(x,y)=(|x|,|y|)$. The fibre $\pi^{-1}(s,r)$ can be identified with the $\textup{Sp}(1)$ orbit of the point $(s\epsilon, re)$. At $p\in M^-_{\epsilon,c}$ we have an identification
\begin{equation}\label{e02.5}
    T_{p}M^-_{\epsilon,c}=T(\textup{Sp}(1)\cdot p)\oplus T_{\pi(p)}\Gamma
\end{equation}
where $\textup{Sp}(1)\cdot p$ denotes the $\textup{Sp}(1)$ orbit of $p$. To compute the tangent space, we linearize the action $Q$ from \autoref{e5.1}. Let $q=e^{tv}$ for $v\in\mathfrak{sp}(1)$, so that $\bar{q}=e^{-tv}$, and
\begin{equation*}
    \frac{d}{dt}\bigg\rvert_{t=0}e^{tv}xe^{-tv}=[v,x], \;\;\; \frac{d}{dt}\bigg\rvert_{t=0}ye^{-tv}=-yv,
\end{equation*}
from which it follows that $\partial Q_{\textup{Id}}(v)(x,ye)=([v,x],-yve)$. Since $\textup{Sp}(1)=S^3\subset\mathbb{H}$, we have $\mathfrak{sp}(1)=T_{\textup{Id}}\textup{Sp}(1)=T_{e_1}S^3$, which is the span of $e_2,e_3,e_4$. Thus, we can write
\begin{equation}\label{e5.4}
    T(\textup{Sp}(1)\cdot p)=\{\partial Q_{\textup{Id}}(v)\cdot p\;\;|\;\; v\in\textup{span}_\mathbb{R}(e_2,e_3,e_4) \}.
\end{equation}
By \autoref{l3.8}, $M^-_{\epsilon,c}$ and $H_{s\epsilon}$ intersect transversely in a circle for any $s>0$ and so $\mu(M^-_{\epsilon,c}\cap H_{s,\epsilon})$ is well-defined. By \autoref{p02.6}, its value does not depend on $(\epsilon,c)\in \mathcal{C}^+$ or $s>0$, so we assume $\epsilon=e_2$ and $s=r=c=1$. Let $M=M^-_{e_2,1}$, $H=H_{e_2}$, and $Z=M^-_{e_2,1}\cap H_{e_2}$. Calculation shows $Z=\{(e_2,\cos(\theta)e_5-\sin(\theta)e_6)\; | \; \theta\in [0,2\pi)\}$. Let $p_\theta=(e_2,\cos(\theta)e_5-\sin(\theta)e_6)$. From \autoref{e5.4} we obtain
\begin{equation*}
   T(\textup{Sp}(1)\cdot p_\theta)=\{([v,e_2],(-\cos(\theta)e_5+\sin(\theta)e_6)v)\;\;|\;\; v\in\textup{span}_\mathbb{R}(e_2,e_3,e_4)\},
\end{equation*}
which is the span of $v_0,v_1,v_2$ with $v_0 = \sin\theta e_5+\cos\theta e_6$, $v_1 = -e_4+\cos\theta e_7-\sin\theta e_8$, and $v_2 = e_3 + \sin\theta e_7+\cos\theta e_8$. Here, $-v_0$ is an oriented basis for $T_{p_\theta}Z$. On the other hand, $T_{(1,1)}\Gamma$ is the span of $v_3=(4/3)e_2+(\cos(\theta)e_5-\sin(\theta)e_6)$, using $\frac{\partial s}{\partial r}(1,1)=4/3$. 

Combining these calculations with the identification in \autoref{e02.5} we conclude that $\{v_1,v_2,v_3\}$ provides a frame for $N_{Z/M}$. We claim this is consistent with the framing of $N_{Z/M}$ determined by the spin structure on $M$. The frame we computed came from a frame for $TM|_Z$, which itself was computed using the group structure on $\textup{Sp}(1)=S^3$. The spin structure on $M$ is inherited from the spin structure on $S^3$ and a Lie framing of $S^3$ (that is, one constructed using the group structure) extends the (unique) spin structure on $S^3$, viewed as a framing over the 2-skeleton \cite{KM99}. Thus, the framing for $N_{Z/M}$ determined by $\{v_1,v_2,v_3\}$ must agree with that induced by the spin structure on $M$. On the other hand, observing that $T_{p_\theta}H$ is spanned by $e_5,e_6,e_7,e_8$, we get a frame for $N_{Z/H}$ given by $w_1=e_7$, $w_2=e_8$, and $w_3=-\cos(\theta)e_5+\sin(\theta)e_6$. This frame, together with the frame $-v_0$ for $TZ$, extends over the obvious disc in $H$ bounding $Z$ and thus must agree with the framing on $N_{Z/H}$ induced by the spin structure on $H$. It now remains only to compare our frames using the isomorphism between $N_{Z/M}$ and $N_{Z/H}$ induced by the standard $G_2$-structure $\varphi_0$ on $\textup{Im}(\mathbb{O})$. We compute
\begin{equation}
    (\varphi_0||_Z)\;\lrcorner\; v_0 = \sin\theta (v^\ast_1\wedge w^\ast_2 - v^\ast_2\wedge w^\ast_1)-\cos\theta (v^\ast_1\wedge w^\ast_1+v^\ast_2\wedge w^\ast_2) -\frac{3}{4}v^\ast_3\wedge w^\ast_3
\end{equation}
and combining this with \autoref{l03.1} we see that the parity of $\mu$ is determined by whether or not the frames $\{v_1,v_2,v_3\}$ and $\{\sin\theta v_2+\cos\theta v_1, \cos\theta v_2-\sin\theta v_1, 3/4 v_3\}$ for $N_{Z/M}$ are homotopic. But these are not homotopic because the obvious map taking one frame to the other (after normalizing them to be orthonormal frames) represents the nontrivial element in $\pi_1(\textup{SO}(3))\cong\mathbb{Z}_2$. Therefore, $\mu=-1$.
\end{proof}

%\begin{equation*}
%\theta\mapsto \frac{4}{3}\begin{pmatrix}
%\cos\theta & \sin\theta & 0 \\
%-\sin\theta & \cos\theta & 0 \\
%0 & 0 & 3/4 \\
%\end{pmatrix}
%\end{equation*}

Although $M^+_{\epsilon,c}$ is not spin, we will see in the next section that there is a sort of relative sign that can be attached to the intersection circle of $M^+_{\epsilon, c}$ and $H_{s\epsilon}$, because of the existence of preferred bounding discs (see \autoref{f4.3}). In general, this will agree with the invariant $\mu$ when it is defined. We will regard $M^+_{\epsilon, c}$ and $H_{s\epsilon}$ as a sort of prototypical local model for intersection circles of sign $-1$. Note that, as already observed in \autoref{l3.8}, by deforming $H_{s\epsilon}$ from $s>s_0(c)$ to $s<s_0(c)$ we can remove the intersection circle with $M^+_{\epsilon,c}$ as pictured in \autoref{f6} below.

\begin{figure}[h]
\begin{center}
\begin{tikzpicture}[scale=1.25, decoration = {markings}]
\draw plot [smooth, tension=1.5] coordinates { (-1.5,2.5) (0,0) (1.5,2.5)};
    \node[label=left: ${\Sigma^+\subset M^+_{\epsilon,c}}$] at (-1.5, 2.75) {};
\draw plot [smooth, tension =1] coordinates { (-1.25,1.39) (0,1) (1.25,1.39)};
\draw[dashed] plot [smooth, tension=1] coordinates { (-1.25,1.39) (0,1.5) (1.25,1.39) };
\draw plot coordinates {(-3,1) (3,1)};
\draw plot coordinates {(-3,1) (-2.5,1.5)};
\draw[dashed] plot coordinates {(-2.5,1.5) (3.5,1.5)};
\draw plot coordinates {(3.5,1.5) (3,1)};
    \draw plot coordinates {(-3,0.5) (3,0.5)};
    \draw plot coordinates {(-3,0.5) (-2.5,1)};
    \draw[dashed] plot coordinates {(-2.5,1) (3.5,1)};
    \draw plot coordinates {(3.5,1) (3,0.5)};
\node[label=left: ${H_{(s_0(c)+1)\epsilon}}$] at (5.5,1.5) {};
\node[label=left: ${H_{(s_0(c)+0.5)\epsilon}}$] at (5.5,0.75) {};
\end{tikzpicture}   
\end{center}
\caption[Removing the intersection circle.]{Coassociative translations of $H_{s\epsilon}$ so that its intersection circle with $M^+_{\epsilon,c}$, contained in the fibre $\Sigma^+$, shrinks and disappears.}
\label{f6}
\end{figure}

The intersection circle of $M^-_{\epsilon, c}$ and $H_{s\epsilon}$ can also be deformed away by moving $s>0$ to $s\leq 0$, but this coassociative deformation of $H_{s\epsilon}$ does not remove the intersection circle ``along a disc.'' We regard this as a ``non-local'' separation of $M^-_{\epsilon, c}$ and $H_{s\epsilon}$, and by design \autoref{t1.1} excludes this case. We will discuss the motivation behind this further in \autoref{sl}. In the following section, we prove \autoref{t1.1}.

\subsection{Obstruction to separability}\label{s2.3}
Let $X_\pm\subset Y$ be transversely-intersecting coassociative submanifolds with compact intersection $Z$. Inspired by the Harvey-Lawson examples, we make the following definition. 

\begin{dfn}\label{d02.13}
We will say $\Gamma\in\pi_0(Z)$ is locally removable if there are smooth coassociative deformations $F_\pm : X_\pm\times [0,1]\rightarrow Y$, with respect to a smooth family of $G_2$-structures $\varphi_t$, and embeddings $\iota_\pm : D^2\rightarrow X_\pm$ such that
\begin{enumerate}
    \item $\partial D_\pm = F^{-1}_\pm(\Gamma)\subset X_\pm\times \{0\}$ where $D_\pm:=\iota_\pm(D^2)$.
    \item $F_\pm(X_\pm\times \{t\})$ intersect (transversely when $t\neq 1$) in $F_+(\iota_+(S_{1-t})\times \{t\})=F_-(\iota_-(S_{1-t})\times \{t\})$ for all $t\in [0,1]$, where $S_{1-t}=\{(1-t)e^{i\theta}\in D^2\; |\; \theta\in [0,2\pi)\}$.
\end{enumerate}
\end{dfn}

By Property 1 of \autoref{d02.13}, a removable component $\Gamma$ is necessarily contractible in both $X_+$ and $X_-$, and comes with preferred bounding discs $D_\pm$. These discs determine framings for $N_\pm|_\Gamma$, which can be compared using the isomorphism between $N_\pm$ determined by the $G_2$-structure.

%It follows from the discussion at the end of \autoref{s2.4} that the intersection of $M^+_{\epsilon,c}$ and $H_{s\epsilon}$ is locally removable while that of $M^-_{\epsilon,c}$ and $H_{s\epsilon}$ is not.

\begin{lem}\label{p4.4.5}
If $\Gamma\in \pi_0(Z)$ is locally removable, then $D_+$ and $D_-$ must determine opposite framings for $N_+|_\Gamma$ and $N_-|_\Gamma$, respectively.
\end{lem}
\begin{proof}
The discs $D_+$ and $D_-$ determine a $2$-sphere in $X_+\#_\Gamma X_-$, as in the proof of \autoref{t4.11}, where the generalized connected sum is formed using the framings for $N_\pm|_\Gamma$ determined by $D_\pm$ and the isomorphism determined by the $G_2$-structure. By \autoref{p02.6}, the topology of $X_+\#_{\Gamma(t)}X_-$ is the same for all $t\in [0,1)$, thus since $\Gamma$ is removable it must be possible to blow down the $2$-sphere formed by $D_\pm$ (that is, the boundary of a neighborhood of this $2$-sphere in $X_+\#_\Gamma X_-$ must be homeomorphic to $S^3$). But this is only possible if the self-intersection of this $2$-sphere is $\pm 1$, which by the proof of \autoref{t4.11} forces the parity of $\Gamma$ to be $-1$.
\end{proof}

Note that so far we have not assumed $X_\pm$ are spin.

\begin{exm}\label{e02.16}
When $X_+=M^+_{\epsilon,c}$ and $X_-=H_{s\epsilon}$, for $s>s_0(c)$, we have discs $D_\pm$ along which the intersection circle vanishes for the family of coassociative deformations pictured in \autoref{f6}. Here, $D_+$ is swept out in the fibre $\Sigma^+$ of $X_+$, which does not move, and $D_-$ is the obvious disc in the $4$-plane $H_{s\epsilon}$ bounding the intersection circle. It follows from \autoref{p4.4.5} that the framings determined by these discs are opposite.
\end{exm}

The following proposition follows easily given \autoref{p4.4.5} and is the second statement in \autoref{t1.1}, the first statement having been proven already in \autoref{p02.6}.

\begin{prop}\label{p02.17}
Let $X_\pm$ be equipped with spin structures. If a component $\Gamma\in \pi_0(Z)$ is locally removable then
\begin{equation*}
    \mu_{X_+,X_-}(\Gamma)=-1.
\end{equation*}
\end{prop}
\begin{proof}
Given \autoref{p4.4.5}, it only remains to observe that the framings for $N_\pm|_\Gamma$ determined by any choice of bounding discs $D_\pm$ for $\Gamma$ must agree with the framings determined by the spin structures on $X_\pm$. The framing for $N_\pm|_\Gamma$ comes from the framing for $TX_\pm|_\Gamma$ determined by $D_\pm$, which we view as the data of a lift $\tilde{f}_\pm: \Gamma\rightarrow B\{1\}$ of the classifying map $f_\pm: \Gamma\rightarrow B\textup{SO}(4)$ for $TX_\pm|_\Gamma$. Likewise, the spin structure on $N_\pm|_\Gamma$ is induced by the one on $TX_\pm|_\Gamma$, which is given by a lift $f'_\pm: \Gamma\rightarrow B\textup{Spin}(4)$ of $f_\pm$. Since $H^1(S^1;\mathbb{Z}_2)=\mathbb{Z}_2$, there are in general two possible such lifts of the map $f_\pm$ up to (vertical) homotopy. The claim is that the lift of $f_\pm$ given by $Bi\circ \tilde{f}_\pm: \Gamma\rightarrow B\textup{Spin}(4)$, where $i:\{1\}\hookrightarrow \textup{Spin}(4)$ denotes the inclusion, is (vertically) homotopic to the lift $f'_\pm$. But this is necessarily the case because both extend to maps $D_\pm\rightarrow B\textup{Spin}(4)$.
\end{proof}

\subsection{Relation to near-symplectic geometry}\label{s02.5}
In this section, we relate our invariant $\mu$ to a construction from near-symplectic geometry in the special case when the coassociatives $X_\pm$ are graphical near their intersection $Z$. First, some generalities. If $X\subset (Y,\varphi)$ is a smooth coassociative, contracting normal vector fields with the $G_2$-structure $\varphi$ induces a bundle isomorphism
\begin{equation}\label{e2.4}
   N_{X/Y}\rightarrow \Lambda^+_X, \;\;\; V\mapsto (\varphi\;\lrcorner\; V)|_X
\end{equation}
where $\Lambda^+_X$ is the bundle of self-dual 2-forms on $X$ \cite{M98}. Thus, submanifolds sufficiently $C^1$-close to $X$ are identified with graphs of self-dual 2-forms. Suppose $X_\pm\subset Y$ are transversely-intersecting coassociatives with compact intersection $Z$ and that $X_+$ can be described near $Z$ as the graph of a self-dual $2$-form $\omega$ on $X_-$. Since $\omega$ vanishes on $Z$, we have an intrinsic derivative $\nabla\omega$, independent of the choice of connection $\nabla$. Contracting with a unit tangent field $u$ to $Z$ gives a section
\begin{equation}\label{e02.4}
    B_{X_+,X_-}:=\iota_u\nabla\omega\in \Gamma(Z,N^\ast_-\otimes N^\ast_-).
\end{equation}

%The following result is an analogue of the observation of Taubes for near-symplectic forms discussed in \autoref{s2.3}. Note that $\omega$ is not necessarily near-symplectic because it is not necessarily closed.

\begin{lem}\label{p4.13}
$B_{X_+,X_-}$ is a nondegenerate, symmetric bilinear form satisfying
\begin{equation*}
    \tr B_{X_+,X_-}=\det B_{X_+,X_-}.
\end{equation*}
\end{lem}
\begin{proof}
Since $\omega$ vanishes transversely, $B_{X_+,X_-}$ is nondegenerate. Choose an orthogonal frame $e_i$ and coframe $\sigma^i$, for $0\leq i\leq 3$, centered at a point $p\in Z$, such that $u=e_0$. In this frame, let $\omega=T_{ij}(\sigma^i\wedge\sigma^j)$ and observe that for any $1\leq k\leq 3$ we have
\begin{equation*}
\nabla_{e_k}(T_{ij}(\sigma^i\otimes\sigma^j))=(D_{e_k}T_{ij})(\sigma^i\otimes\sigma^j)+T_{ij}(\Gamma^\ell_{kj}(\sigma^\ell\otimes\sigma^j)+ \Gamma^\ell_{ki}(\sigma^i\otimes\sigma^\ell)).
\end{equation*}
Since $\omega$ vanishes identically on $Z$, we know $T_{ij}(p)=0$ and thus $\nabla_{e_k}\omega = (D_{e_k}T_{ij})(\sigma^i\wedge\sigma^j)$. Contracting with $e_0$, we obtain $B_{X,X'}(e_k)=(D_{e_k}T_{0j})\sigma^j$ and in particular
\begin{equation}\label{e4.4}
    B_{X,X'}(e_k)e_m = D_{e_k}T_{0m}, \;\;\; B_{X,X'}(e_m)e_k=D_{e_m}T_{0k}.
\end{equation}
The trace of $B_{X_+,X_-}$ at the point $p$ is given by
\begin{equation}\label{e03.2}
    \tr(B_{X,X'})_p=(D_{e_k}T_{0j})g^{jk}=(D_{e_k}T_{0j})\delta^{jk}=D_{e_k}T_{0k}.
\end{equation}
Now, write $\omega=f_j\omega_j$ for $\omega_j$ the standard basis of self-dual $2$-forms written using the coframe $\sigma^i$. The condition that the graph of $\omega$ is coassociative is equivalent to
\begin{equation}\label{e4.5}
    df_1\wedge df_2\wedge df_3 =\sum df_j\wedge \omega_j,
\end{equation}
which is a system of four equations. Since $\omega$ vanishes identically on $Z$, we have $D_{e_0}f_j=0$ for all $1\leq j\leq 3$, and thus \autoref{e4.5} implies the three equations
\begin{equation*}
    D_{e_2}f_1=D_{e_1}f_2, \;\;\; D_{e_3}f_1=D_{e_1}f_3, \;\;\; D_{e_3}f_2=D_{e_2}f_3.
\end{equation*}
In light of \autoref{e4.4}, this says that $B_{X_+,X_-}$ is symmetric. The fourth equation comes from examining the coefficient on $df_1\wedge df_2\wedge df_3$ in \autoref{e4.5} and reads
\begin{equation*}
    \det (B_{X_+,X_-})_p = D_{e_1}f_1 + D_{e_2}f_2 + D_{e_3}f_3 =  \tr(B_{X_+,X_-})_p,
\end{equation*}
where the last equality uses \autoref{e03.2}.
\end{proof}

If $X_\pm$ are sufficiently $C^1$-close near $Z$, then $\det B_{X_+,X_-}$ is almost zero and so $B_{X_+,X_-}$ is almost traceless by \autoref{p4.13}. Thus, it has pointwise either one negative eigenvalue and two positive eigenvalues or vice versa. Reversing the orientation of $Z$ if necessary, we can assume $B_{X_+,X_-}$ has pointwise one negative and two positive eigenvalues. 

Denote by $L_{X_+,X_-}\subset N_-$ the  eigensubbundle corresponding to the negative eigenvalue. In this case, one can attach signs to the components of $Z$ via the assignment
\begin{equation}\label{e02.11}
    \Gamma\in \pi_0(Z)\mapsto w_1(L_{X_+,X_-}|_\Gamma)\in H^1(\Gamma;\mathbb{Z}_2).
\end{equation}

The map in \autoref{e02.11} imitates the construction of the so-called parity invariant in near-symplectic geometry, which we briefly digress to explain. Given a near-positive $2$-form $\omega$ on an oriented, smooth $4$-manifold $X$, one can find a conformal structure on $X$ for which $\Lambda^+_X$ has $\omega$ as a section \cite{ADK}. There is an associated bilinear form
\begin{equation*}
    B_\omega := \iota_u\nabla\omega\in N^\ast_{Z_\omega/X}\otimes N^\ast_{Z_\omega/X},
\end{equation*}
where $N_{Z_\omega/X}$ denotes the normal bundle to $Z_\omega=\omega^{-1}(0)\subset X$ and $u$ is a unit tangent vector field on $Z_\omega$. Because $\omega$ vanishes transversely, $B_\omega$ is nondegenerate. It is independent of the choice of conformal structure because $\Lambda^+_X|_Z=\textup{im}(\nabla\omega)$ for any such choice \cite{P07}. If $\omega$ is closed, and so near-symplectic, Taubes observed \cite{T} that $B_\omega$ is symmetric and $\tr B_\omega=0$. Orienting $Z_\omega$ so that $\det B_\omega<0$, one can attach signs to $Z_\omega$ using the eigensubbundle $L\subset N_{Z_\omega/X}$ corresponding to the negative eigenvalue \cite{P07}.

The fact that we can imitate this construction in our case reflects the fact that the self-dual $2$-form on $X_-$ whose graph is identified with $X_+$ is closed ``up to a small error'' when $X_\pm$ are sufficiently $C^1$-close near $Z$. A somewhat more surprising observation is that the signs from \autoref{e02.11} admit a different description coming from special Lagrangian geometry. Let $X_\pm$ be transverse coassociatives with compact intersection $Z$ and as before denote by $N_\pm$ the normal bundle to $Z$ in $X_\pm$ and by $N$ the normal bundle to $Z$ in $Y$.

\begin{prop}\label{p4.5}
$N_\pm$ form a transverse pair of special Lagrangian subbundles of $N$.
\end{prop}
\begin{proof}
We first show the $G_2$-structure $\varphi$ on $Y$ induces a reduction of the structure group of $N$ to $\textup{SU}(3)$. The bundle $\Lambda^3 T^\ast Y|_Z$ splits as $\Lambda^3 T^\ast Y|_Z=(T^\ast Z\otimes \Lambda^2 N^\ast)\oplus \Lambda^3 N^\ast$. Let $\Omega'$ denote the component of $\varphi||_Z$ in $\Lambda^3 N^\ast$, which is nowhere-vanishing because $\varphi$ is nondegenerate. Given a choice $u$ of unit tangent vector field to $Z$, we can repeat the construction in the proof of \autoref{l03.1} to define $\omega':=(\varphi||_Z)\;\lrcorner\;u\in \Gamma(Z,\Lambda^2 N^\ast)$, which we recall is nondegenerate because $\varphi$ is strongly nondegenerate. We can thus reduce the structure group of the frame bundle of $N$ to $\textup{SU}(3)\subset G_2$ by defining the principal $\textup{SU}(3)$ subbundle $\textup{Fr}_\varphi(N)\subset \textup{Fr}(N)$ with fibres
\begin{equation*}
    \textup{Fr}_\varphi(N)_y=\{u\in \textup{Fr}(N)_y\;\;|\;\; u^\ast\omega_0=\omega_y, \; u^\ast\textup{Im}(\Omega_0)=\Omega'_y\}
\end{equation*}
for all $y\in Y$ (recall that the stabilizer of a (nonzero) vector in $\mathbb{R}^7$ under the action of $G_2$ is isomorphic to $\textup{SU}(3)$ \cite{B05}). Note that the corresponding almost-complex structure on $N\rightarrow Z$ is given by taking the cross-product with $u$. Now, since $X_\pm$ intersect transversely, $N_\pm$ form a transverse pair of subbundles of $N$. Write $\varphi||_Z = u^\ast\wedge\omega'+\Omega'$. Considering the splitting $TX_{\pm}|_Z=TZ\oplus N_\pm$, one sees that $\omega'$ and $\Omega'$ must vanish on $N_\pm$ because $X_\pm$ are coassociative, meaning $\varphi|_{X_\pm}=0$. This completes the proof.
\end{proof}

We now show that the first Stiefel-Whitney class $w_1(L_{X_+,X_-})$ of the line bundle $L_{X_+,X_-}\rightarrow Z$ is an invariant of $N_\pm$ up to homotopy through transverse pairs of special Lagrangian subbundles that are graphical and sufficiently close.

\begin{rem}\label{r2.18}
If there were a nontrivial homotopy invariant for an arbitrary pair of transverse special Lagrangian subbundles $N_\pm\subset N$, then we could use it to define an invariant for $X_\pm$ without additional assumptions on their geometry. We will show (see \autoref{t3.11}) that there is no such homotopy invariant. This reflects the fact that $L_{X_+,X_-}$ is only well-defined when $X_\pm$ are sufficiently $C^1$-close near $Z$.
\end{rem}

Denote by $\textup{SLG}(n)$ the Grassmanian of Special Lagrangian planes in $\mathbb{C}^n$ for $n\geq 2$. The group $\textup{SU}(n)$ acts transitively and $\textup{SLG}(n)$ is diffeomorphic to $\textup{SU}(n)/\textup{SO}(n)$. There is an open submanifold
\begin{equation*}
    M_0(n) := \{(V,V')\;\; | \;\; V,V'\in \textup{SLG}(n), \; V\cap V'=\{0\}\},
\end{equation*}
of $(\textup{SLG}(n))^2$ consisting of the transverse planes. For an $n$-tuple $\theta=(\theta_1,\cdots,\theta_n)\in (0,\pi)^n$, denote by $V_\theta$ the corresponding $n$-plane $V_\theta = \{(e^{i\theta_1}x_1,\cdots, e^{i\theta_n}x_n)\;\; | \;\;x_j\in\mathbb{R}\}\subset \mathbb{C}^n$ and given a vector $\theta\in (0,\pi)^n$, denote by $\tr \theta=\theta_1+\cdots +\theta_n$ the sum of its entries. Each $V_\theta$ is Lagrangian and $V_\theta$ is special Lagrangian if and only if $\tr\theta=m\pi$ for $m\in \{1,\cdots, n-1\}$. In this case, $(V_0,V_\theta)\in M_0(n)$. Define the set
\begin{equation*}
    I_0(n) = \{\theta\in (0,\pi)^n \;\; | \;\; \theta_1\leq \cdots\leq\theta_n \;\;\textup{and}\;\;\tr\theta\in \pi\mathbb{Z}\}.
\end{equation*}

\begin{lem}[\cite{J03}, Prop. 9.1]\label{p4.3}
If $(V,V')\in M_0(n)$, there a unique $\theta\in I_0(n)$ for which there exists some $B\in \textup{SU}(n)$ satisfying $B\cdot (V,V')=(V_0,V_\theta)$.
\end{lem}

The natural map $I_0(n)\longrightarrow M_0(n)$ assigning $\theta$ to $(V_0,V_\theta)$ is continuous with respect to the subspace topologies on $I_0(n)$ and $M_0(n)$, and it follows from \autoref{p4.3} that it induces a homeomorphism
\begin{equation}\label{e03.6}
    I_0(n)\rightarrow M_0(n)/\textup{SU}(n).
\end{equation}

We now specialize to $n=3$. Denote by $K_4\cong \mathbb{Z}_2\times \mathbb{Z}_2$ the Klein 4-group and recall the set of diagonal matrices $D\subset \textup{SO}(3)$ is isomorphic to $K_4$. A straightforward calculation using the condition $\tr\theta\in\pi\mathbb{Z}$ gives the following result.

\begin{lem}\label{l4.5}
Let $(V_0,V_\theta)\in M_0(3)$ for $\theta\in I_0(3)$ and let $\textup{Stab}_{\textup{SU}(3)}(V_0,V_\theta)\subset \textup{SU}(3)$ be its stabilizer. We have the following possibilities
\begin{equation*}
   \textup{Stab}_{\textup{SU}(3)}(V_0,V_\theta)\cong \left\{\begin{matrix}
\textup{SO}(3) & \textup{if}\; \theta_1=\theta_2=\theta_3 \\
\textup{O}(2) & \textup{if}\; \theta_1=\theta_2<\theta_3\;\;\textup{or}\;\; \theta_1< \theta_2=\theta_3  \\
K_4 & \textup{if}\; \theta_1<\theta_2<\theta_3. \\
\end{matrix}\right.
\end{equation*}
\end{lem}
Using \autoref{p4.3}, we see \autoref{l4.5} completely characterizes the possible stabilizer subgroups of an arbitrary pair $(V,V')\in M_0(3)$. Now, for each $0\leq m\leq 3$, let $\Pi_m\subset I_0(3)$ denote the set of vectors $\theta\in I_0(3)$ for which exactly $m+1$ entries in the vector coincide. We have a decomposition of $I_0(3)$ into disjoint subspaces as the union of the $\Pi_m$, with $\Pi_0$ being the interior of $I_0(3)$, corresponding to the stabilizer subgroups of a pair $(V,V')\in M_0(3)$ as classified by \autoref{l4.5}. Define the orbit spaces $ \mathcal{O}_m := \textup{SU}(3)\cdot P(\Pi_m)\subset M_0(3)$, each of which is a finite union of submanifolds of $M_0(3)$.

\begin{prop}\label{p02.27}
$M_0(3)$ has two connected components and each is simply-connected.
\end{prop}
\begin{proof}
The homeomorphism in \autoref{e03.6} implies $M_0(3)/\textup{SU}(3)$ has two connected components and all its higher homotopy groups vanish. The generic region of $M_0(3)$ admits a fibration
\begin{equation*}
    \textup{SU}(3)/K_4\rightarrow (M_0(3)-\mathcal{O}_1-\mathcal{O}_2)\rightarrow (M_0(3)-\mathcal{O}_1-\mathcal{O}_2)/\textup{SU}(3).
\end{equation*}
Because each connected component of $\mathcal{O}_2$ is a smooth submanifold of $M_0(3)$ diffeomorphic to $\textup{SU}(3)/\textup{SO}(3)$, which has real dimension $5$, removing it does not affect the fundamental group of $M_0(3)$. Examining the long exact sequence in homotopy groups for the above fibration then gives that the fundamental group of either connected component of $M_0(3)-\mathcal{O}_1$ is isomorphic to $K_4$. Each of the two generators can be killed by explicit nullhomotopies connecting to points in $\mathcal{O}_1$ whose projections down to $I_0(3)$ lie, respectively, in one of the two boundary components (see \autoref{f2}). Thus, each component of $M_0(3)$ is simply-connected.
\end{proof}

\begin{thm}\label{t3.11}
Let $E\rightarrow Z$ be a vector bundle with structure group $\textup{SU}(3)$ over a compact 1-manifold $Z$. Any two pairs of transverse special Lagrangian subbundles of $E$ are homotopic through transverse pairs of special Lagrangian subbundles of $E$.
\end{thm}
\begin{proof}
 Let $\textup{F}(E)\rightarrow Z$ denote the bundle of oriented, orthonormal frames in $E$. We have the following associated fibre bundle
\begin{equation}\label{e02.14}
    M_0(E):=F(E)\times_{\textup{SU}(3)} M_0(3)
\end{equation}
and a pair $(V,V')$ of transverse special Lagrangian subbundles of $E\rightarrow Z$ determines sections $\eta_{(V,V')}$ and $\eta_{(V',V)}$ of $M_0(E)$. Pulling back the tautological bundles over $M_0(E)$ via these sections recovers the original pair of subbundles and in general every section $\eta$ of $M_0(E)$ is of the form $\eta=\eta_{(V,V')}$ for some $(V,V')$. Two transverse pairs $(V,V')$ and $(W,W')$ of special Lagrangian subbundles of $E$ are homotopic if and only if their corresponding sections $\eta_{(V,V')}$ and $\eta_{(W,W')}$ are homotopic. There is a bijection \cite{W78} between (vertical) homotopy classes of sections of $M_0(E)$ and elements of the cohomology group $H^1(Z;\pi_1(M_0(3)))$, which vanishes by \autoref{p02.27}. This completes the proof.
\end{proof}

We can restrict our attention to transverse graphical pairs of special Lagrangian subbundles, which correspond to the subspace of $(V,V')\in M_0(3)$ satisfying $V^\perp\cap V'=\{0\}$. This is identified via the homeomorphism in \autoref{e03.6} with the complement in $I_0(3)$ of the two line segments 
\begin{equation*}
 \ell_\pi=\{\theta\in I_0(3)\;\; | \;\; \theta_3=\pi/2\}\;\;\;\textup{and}\;\;\;  \ell_{2\pi}=\{\theta\in I_0(3)\;\; | \;\; \theta_1=\pi/2\},
\end{equation*}
which has four connected components. The same argument as in the proof of \autoref{p02.27} shows two are simply-connected, while the two that form neighborhoods of the diagonal $\Delta$ (the striped regions in \autoref{f2}) have fundamental group $\mathbb{Z}_2$.

\begin{figure}[h]
\begin{center}
\begin{tikzpicture}[decoration={
    markings,
    mark=at position 0.5 with {\arrow{>}}}
    ]
\node[label=left:${\theta_{1}=\theta_{2}}$] at (-7.5,4.6) {};
\node[label=left:${\theta_{2}=\theta_{3}}$] at (-5,3.2) {};
\node[label=left:${\theta_{1}=\theta_{2}}$] at (-0.5,2) {};
\node[label=left:${\theta_{2}=\theta_{3}}$] at (3,1) {};
\node[label=left:${\tr\theta=\pi}$] at (3.2,3.2) {};
\node[label=left:${\tr\theta=2\pi}$] at (-3.5,5) {};
\draw[thick] (-9,3) to (-4,4);
\draw[thick] (-9,3) to (-7,5);
\draw[dashed] (-7,5) to (-4,4);
\draw[thick] (-1,0) to (0,3.5);
\draw[thick] (-1,0) to (3,2);
\draw[dashed] (0,3.5) to (3,2);
\filldraw[fill=lightgray,opacity=0.5,draw=none] (-9,3) to (-4,4) to (-7,5);
\filldraw[fill=lightgray,opacity=0.5,draw=none] (-1,0) to (0,3.5) to (3,2);
\draw[thick, postaction={decorate}] (-7,5) to (-7.5,3.3) {}; 
\draw[thick, postaction={decorate}] (3,2) to (-0.65,1.2) {}; 
\node[label=left:${\ell_{2\pi}}$] at (-7.2,3.8) {};
\node[label=right:${\ell_\pi}$] at (0.2,1.2) {};
\node[dot,label=above right:$\Delta$] () at (-4,4) {};
\node[dot,label=above left:$\Delta$] () at (0,3.5) {};
\path[name path=A, draw, dotted] (-7,5) to (-7.5,3.3);
\path[name path=B, draw, dotted] (-4,4) to (-4,4);
\tikzfillbetween[of=A and B]{pattern=north east lines};
\path[name path=C, draw, dotted] (3,2) to (-0.65,1.2);
\path[name path=D, draw, dotted] (0,3.5) to (0,3.5);
\tikzfillbetween[of=C and D]{pattern=north east lines};
\end{tikzpicture}
\captionof{figure}{The two connected components of $I_0(3)$, interchanged by the $\mathbb{Z}_2$-action $(V,V')\mapsto (V',V)$. The vertex corresponds to stabilizer $\textup{SO}(3)$, the edges to $\textup{O}(2)$, and the open regions to $K_4$. The lines $\ell_\pi$ and $\ell_{2\pi}$ are labeled on $I_0(3)$ with arrows. The diagonal $\Delta\subset \textup{SLG}(3)^2$ is also marked.}
\label{f2}
\end{center}
\end{figure}

As such, there is an obstruction class $\mathfrak{ob}\in H^1(S^1;\mathbb{Z}_2)$, which vanishes for two given pairs $(V,V')$ and $(W,W')$ if and only if they are homotopic through close graphical pairs of transverse special Lagrangian subbundles of $E\rightarrow S^1$.

\begin{lem}\label{l02.23}
Given two close graphical pairs $(V,V')$ and $(W,W')$ of transverse special Lagrangian subbundles, there are line subbundles $L_{V,V'}\subset V$ and $L_{W,W'}\subset W$ such that
\begin{equation*}
    \mathfrak{ob}((V,V');(W,W')) = w_1(L_{V,V'})-w_1(L_{W,W'}).
\end{equation*}
\end{lem}
\begin{proof}
There is a unique linear map $V\rightarrow V^\perp$ whose graph is $V'$, and composing with the isomorphism $V^\perp\cong V^\ast$, using that $V$ is Lagrangian, gives a bilinear form $B_{V,V'}: V\rightarrow V^\ast$ satisfying $\tr B_{V,V'}=\det B_{V,V'}$, since $V'$ is special Lagrangian. The closeness assumption implies that we can associate, by the same construction as described following \autoref{p4.13}, an eigensubbundle $L_{V,V'}\rightarrow S^1$. Likewise for the pair $(W,W')$. A homotopy between $(V,V')$ and $(W,W')$ through close graphical pairs induces a homotopy between the eigensubbundles and so $w_1(L_{V,V'})=w_1(L_{W,W'})$. Conversely, suppose $w_1(L_{V,V'})=w_1(L_{W,W'})$. Choose a homotopy $V(t)$ between $V$ and $W$ that restricts to a homotopy $L(t)$ between $L_{V,V'}$ and $L_{W,W'}$. Having fixed the eigensubbundles $L(t)$, we only have to specify the bilinear forms on $L(t)^\perp\subset V(t)$ and since $w_1(L(t))=w_1(L^\perp(t))$ we can choose such a family continuously. The graphs of these give a homotopy $V'(t)$ between $V'$ and $W'$ and by construction $(V(t),V'(t))$ is a close graphical pair for each $t\in [0,1]$.
\end{proof}

We return now to our invariant $\mu$. Let $X_\pm$ be a pair of transverse coassociatives. Recall that to define the bilinear form in \autoref{e02.4} only required $\omega\in \Omega^2_+(X_-)$ to be defined in a neighborhood of $Z$. Suppose however that $\omega$ is globally-defined. For any choice of disc $D_\Gamma\subset X_-$ bounding $\Gamma\in \pi_0(Z)$, we can consider the corresponding disc $\omega(D_\Gamma)\subset X_+$. As in \autoref{s2.3}, we can compare the corresponding framings for $N_\pm|_\Gamma$ using the isomorphism between $N_\pm$ determined by the $G_2$-structure. This parity does not depend on the choice of disc $D_\Gamma$ and if $X_\pm$ are spin it agrees with the invariant $\mu(\Gamma)\in \{\pm 1\}$. On the other hand, we have the sign associated to $\Gamma$ via the first Stiefel-Whitney class of the line bundle $L_{X_+,X_-}\rightarrow Z$ as in \autoref{e02.11}. These two notions of sign agree.

\begin{prop}\label{p02.24}
If $X_\pm$ are sufficiently $C^1$-close, then for all $\Gamma\in \pi_0(Z)$ one has
\begin{equation*}
    \mu(\Gamma) = w_1(L_{X_+,X_-}|_\Gamma).
\end{equation*}
\end{prop}
\begin{proof}
Suppose without loss of generality that $Z$ is connected. In the statement of the proposition, we can either understand $\mu$ to be defined by spin structures on $X_\pm$ or in the relative sense, by comparing the framings for $N_\pm$ determined by bounding discs $D\subset X_-$ and $\omega(D)\subset X_+$. The two perspectives agree, as explained in the proof of \autoref{p02.17}. By \autoref{p4.5}, the normal bundles $N_\pm\subset N$ form a transverse pair of special Lagrangian subbundles and they are a close graphical pair because $X_\pm$ are assumed to be sufficiently $C^1$-close. The line bundle $L_{X_+,X_-}$ constructed in the paragraph following \autoref{p4.13} is evidently the same as the line bundle $L_{N_+,N_-}$ from \autoref{l02.23}. Let $Q\subset M_0(3)$ denote the space of close graphical pairs of transverse special Lagrangian $3$-planes in $\mathbb{C}^3$ and fix a basepoint in one of its components. Combining \autoref{p02.27} with the long exact sequence in homotopy for the pair $(M_0(3),Q)$ we find an isomorphism
\begin{equation}\label{e02.16}
    \pi_2(M_0(3),Q)\cong \pi_1(Q)
\end{equation}
given by the boundary map. Choose a trivialization of $N$ and fix a reference pair $(\underline{V}_0,\underline{V}_\theta)$ of trivial bundles. Using the trivialization, the section $\eta_{(N_+,N_-)}$ of $M_0(N)$ induces a map $S^1\rightarrow Q$ whose class in $\pi_1(Q)\cong\mathbb{Z}_2$ has the same parity as the obstruction class $\mathfrak{ob}$, which is equal to $w_1(L_{N_+,N_-})$ by \autoref{l02.23}. On the other hand, the corresponding element in $\pi_2(M_0(3),Q)$ under the isomorphism of \autoref{e02.16} is given precisely by extending $N_-$ and $N_+$ over bounding discs $D$ and $\omega(D)$, respectively, and its parity is $\mu(Z)$.
\end{proof}

\section{Implications}\label{sl}
Our discussion in this concluding section is somewhat speculative and partially contingent on the main theorem of the forthcoming paper \cite{G25}. Nevertheless, we hope presenting this discussion now will help to clarify the relation of the invariant $\mu$, and the concept of local removability, to non-compactness phenomena for coassociatives. In \cite{G25}, more concrete corollaries of \autoref{t1.1} and \autoref{t1.2} will be established.

\subsection{Non-compactness phenomena}
Let $X\subset (Y,\varphi)$ be a smooth coassociative submanifold of a $G_2$-manifold. McLean \cite{M98} showed that if $d\varphi=0$, then a normal vector field $V$ to $X$ is the deformation vector field to a family of coassociative submanifolds of $Y$ if and only if $d\omega_V=0$, where $\omega_V:=(\varphi\;\lrcorner\; V)|_X$ is the self-dual $2$-form on $X$ corresponding to $V$ under the isomorphism in \autoref{e2.4}. Using this, McLean further showed the following result.

\begin{thm}[\cite{M98}, Theorem 4.5]\label{t04.1}
Let $(Y,\varphi)$ be a $G_2$-manifold with $d\varphi=0$. Let $X\subset Y$ be a smooth, compact coassociative submanifold. The moduli space $\mathcal{M}_{\textup{co-def}}(X)$ of coassociative deformations of $X$ is a smooth manifold of dimension $b^2_+(X)$.
\end{thm}

Here, $b^2_+(X)$ is the dimension of the space of closed self-dual $2$-forms on $X$ and $\mathcal{M}_{\textup{co-def}}(X)$ consists of equivalence classes of coassociative embeddings $X\rightarrow Y$ isotopic through coassociative embeddings to $X\subset Y$, where two such are equivalent if they differ by a diffeomorphism of $X$. In general, $X$ might degenerate in a family to develop singularities and so $\mathcal{M}_{\textup{co-def}}(X)$ will be non-compact. If $\varphi$ is both closed and co-closed, then using the notion of $\varphi$-positive currents \cite{HL82} one can take the closure 
\begin{equation}\label{e04.1}
    \overline{\mathcal{M}}_{\textup{co-def}}(X)
\end{equation}
of $\mathcal{M}_{\textup{co-def}}(X)$ inside the space of integral currents in $Y$. This is compact because the coassociatives in $Y$ isotopic to $X$ have uniformly bounded volume, being calibrated by $\ast_\varphi\varphi$ (c.f. \cite{J03}). A central question in coassociative geometry is to characterize the ``boundary''
\begin{equation}\label{e04.2}
    \partial \overline{\mathcal{M}}_{\textup{co-def}}(X):=  \overline{\mathcal{M}}_{\textup{co-def}}(X)\setminus \mathcal{M}_{\textup{co-def}}(X),
\end{equation}
which roughly speaking means to understand all possible singular degenerations of $X$. By work of Lotay \cite{L09}\cite{L14}, it is known for instance that coassociatives can degenerate in one-parameter families to develop isolated conical singularities. We wish to consider the case when $X$ degenerates into a transverse intersection. 

Let us return for the moment to the Harvey-Lawson coassociatives discussed in \autoref{s2.4}. Although these are non-compact, the deformation theory of asymptotically conical (and conical singular) coassociatives is well-understood due to work of Lotay \cite{L07}\cite{L090}. Let $X_+=M^+_{\epsilon,c}$ for $(\epsilon,c)\in\mathcal{C}^+$ and $X_-=H_{s\epsilon}$. Fixing $\epsilon$, $X_+$ has an obvious family of coassociative deformations given by varying the parameter $c>0$. Similarly, $X_-$ has an obvious family of coassociative deformations given by varying $s$.

By work of Lotay and Kapouleas \cite{LK17}, one can desingularize the singular coassociative $M^+_{\epsilon,c}\cup H_{s\epsilon}\subset \mathbb{R}^7$ for $s>s_0(c)$ by gluing in a family of Lawlor necks parametrized by the intersection circle. Denote by $X_\tau(c,s)\subset \mathbb{R}^7$ the corresponding smooth coassociative, where the gluing parameter $0<\tau\ll 1$ corresponds to the scale of the Lawlor necks (the Lawlor neck in $\mathbb{C}^3$ has a one-dimensional space of deformations as an asymptotically-conical special Lagrangian corresponding to rescaling \cite{M02}). There is an evident three-parameter family of coassociative deformations of $X_\tau(c,s)$ with parameter space
\begin{equation*}
    \mathcal{P}=\{(\tau,c,s)\; | \; \tau\in (0,\tau_0), c\in (0,\infty), s\in (s_0(c),\infty)\}.
\end{equation*}

Singular degenerations of $X_\tau(c,s)$ correspond to boundary points of $\mathcal{P}$. Define
\begin{equation*}
    \bar{\mathcal{P}}=\{(\tau,c,s)\; | \; \tau\in [0,\tau_0), c\in [0,\infty), s\in [s_0(c),\infty)\}
\end{equation*}
noting that $s_0(0)=0$ and let
\begin{equation*}
    \partial_{\{\tau=0\}}\mathcal{P}=\{(0,c,s)\in \bar{\mathcal{P}}\}.
\end{equation*}

\begin{figure}[h]
\begin{center}
\begin{tikzpicture}[decoration={
    markings,
    mark=at position 0.5 with {\arrow{>}}}
    ]
\node[dot,label=below left:${(0,0,0)}$] at (-7.5,-5) {};
\draw[thick] (-7.5,-5) to (2,-5);
\draw[dashed] (-7.5,-5) to (-6,-3);
\draw[dotted] (2,-5) to (3.5,-3);
\draw[dotted] (-6,-3) to (3.5,-3);
\draw[thick] (-7.5,-5) to (-7.5,-1);
\draw[dotted] (2,-5) to (2,-1);
\draw[dotted] (-7.5,-1) to (2,-1);
\filldraw[fill=lightgray,opacity=0.5,draw=none] (-7.5,-5) to (-7.5,-1) to (-6,-3);
\node[dot,label=below right:${(\tau,c,s)}$] at (-4,-3.6) {};
\draw[dotted, postaction={decorate}] (-4,-3.6) to (-7.1,-3.6) {}; 
\node[dot,label={[xshift=0.25cm, yshift=0.1cm]:${(0,c,s)}$}] at (-7.1,-3.6) {};
\draw[dotted,postaction={decorate}] (-7.1,-3.6) to (-7.5,-4.1);
\node[dot,label=left:${(0,0,s)}$] at (-7.5,-4.1) {};
\draw[dotted,postaction={decorate}] (-7.1,-3.6) to (-7.1,-4.5);
\node[dot,label=right:${(0,c,s_0(c))}$] at (-7.1,-4.5) {};
\end{tikzpicture}
\captionof{figure}{The shaded triangle corresponds to $\partial_{\{\tau=0\}}\mathcal{P}\subset\bar{\mathcal{P}}$. Degenerations in one-parameter families are suggested by dotted arrows.}
\label{f07}
\end{center}
\end{figure}

One can think of $\partial_{\{\tau=0\}}\mathcal{P}\subset\bar{\mathcal{P}}$ as corresponding to a region in $\overline{\mathcal{M}}_{\textup{co-def}}(X_\tau(c,s))$ near $X_0(c,s)\in \partial \overline{\mathcal{M}}_{\textup{co-def}}(X_\tau(c,s))$, and we can understand the stratification of $\partial_{\{\tau=0\}}\mathcal{P}$, depicted in \autoref{f07}, in terms of singular degenerations of $X_\tau(c,s)$, $M^+_{\epsilon,c}$, and $H_{s\epsilon}$. The point $(0,c,s)$ corresponds to the singular coassociative $M^+_{\epsilon,c}\cup H_{s\epsilon}$ obtained by letting $\tau\rightarrow 0^+$ and thus shrinking down to zero the Lawlor necks used to resolve $M^+_{\epsilon,c}\cup H_{s\epsilon}$. The point $(0,c,s_0(c))$ corresponds to the union of the two smooth coassociative submanifolds $M^+_{\epsilon,c}$ and $H_{s_0(c)\epsilon}$ intersecting in a point, which is obtained by keeping $M^+_{\epsilon,c}$ fixed while deforming $H_{s\epsilon}$ so that its circle intersection with $M^+_{\epsilon,c}$ shrinks to a point, as described in \autoref{s2.4}. The point $(0,0,s)$ corresponds to the union of $H_{s\epsilon}$ with $M^+_{\epsilon,0}$, intersecting in a circle, obtained by keeping $H_{s\epsilon}$ fixed while deforming $M^+_{\epsilon,c}$ so that it converges to the Lawson-Osserman cone $M^+_{\epsilon,0}$. Finally, the point $(0,0,0)$ in the bottom left corner of \autoref{f07} corresponds to $M^+_{\epsilon,0}\cup H_0$, for which a one-parameter family of smoothings is provided by the other Harvey-Lawson coassociative $M^-_{\epsilon,\tau}$ for $\tau>0$. Thus, the horizontal solid line in \autoref{f07} corresponds to the one-parameter family of coassociative deformations of $M^-_{\epsilon,\tau}$ given by varying $\tau$.

\begin{rem}
What we wish to emphasize with this example is that one consequence of the removability of the circle intersection of $M^+_{\epsilon,c}$ and $H_{s\epsilon}$ is the existence of an additional piece of $\partial \mathcal{M}_{\textup{co-def}}(X_\tau(c,s))$ corresponding to a degeneration of $X_\tau(c,s)$ to a conically-singular coassociative (whose singularity in this case is modeled on the cone $M^+_{\epsilon,0}\cup H_0$).     
\end{rem}

Suppose now that $X_\pm$ are compact coassociatives in a torsion-free $G_2$-manifold $(Y,\varphi)$. Assume their transverse intersection $Z$ is connected and nullhomologous in $X_\pm$, and let $X=X_\tau\subset Y$ for $0<\tau\ll 1$ be the smooth coassociative submanifold of $Y$ constructed in \cite{G25} be resolving $X_0:=X_+\cup X_-$ with a family of Lawlor necks at scale $\tau$ parameterized by $Z$. Note that $\mathcal{M}_{\textup{co-def}}(X_0)$ is not a priori well-defined because $X_0$ is singular. However, we can imitate Joyce's construction \cite{J04I} of the moduli space of deformations of a conically-singular special Lagrangian and make the following tentative definition.

\begin{dfn}\label{d04.2}
Define $\mathcal{M}_{\textup{co-def}}(X_0)$ as a set to be all pairs $(\hat{X}_+,\hat{X}_-)$ such that 
\begin{enumerate}
    \item $\hat{X}_\pm\subset Y$ are coassociative submanifolds intersecting transversely.
    \item There is a homeomorphism $\hat{\iota}:X_0\rightarrow \hat{X}_0$ such that $\hat{\iota}(Z)=\hat{Z}$ and $\hat{\iota}$ restricts to a diffeomorphism on the complement of $Z$, where $\hat{X}_0=\hat{X}_+\cup \hat{X}_-$ and $\hat{Z}=\hat{X}_+\cap \hat{X}_-$. 
    \item $\hat{\iota}|_{X_\pm}$ is isotopic through coassociative embeddings to the inclusion $\iota_\pm:X_\pm\hookrightarrow Y$. 
\end{enumerate}
\end{dfn}

Since transversality is an open condition, one could proceed as in \cite{J04I} to topologize $\mathcal{M}_{\textup{co-def}}(X_0)$ by defining basic open neighborhoods consisting of pairs of all sufficiently nearby coassociative submanifolds expressed as graphs of small self-dual $2$-forms. We will not pursue this further here but, again drawing on work of Joyce \cite{J03}, it seems reasonable that with respect to a suitable topology on $\mathcal{M}_{\textup{co-def}}(X_0)$ the compactified moduli space $\overline{\mathcal{M}}_{\textup{co-def}}(X)$ should be locally diffeomorphic to
\begin{equation}\label{e04.3}
    \mathcal{M}_{\textup{co-def}}(X_0)\times [0,\infty)
\end{equation}
near $\mathcal{M}_{\textup{co-def}}(X_0)\subset \partial \overline{\mathcal{M}}_{\textup{co-def}}(X)$, where the local diffeomorphism
\begin{equation*}
     \mathcal{M}_{\textup{co-def}}(X_0)\times [0,\infty)_\tau\rightarrow \overline{\mathcal{M}}_{\textup{co-def}}(X)
\end{equation*}
is given by the gluing construction in \cite{G25}. Denote by $\partial_0 \overline{\mathcal{M}}_{\textup{co-def}}(X)$ the connected component of $\partial \overline{\mathcal{M}}_{\textup{co-def}}(X)$ containing $X_0$. In analogy with the Harvey-Lawson coassociatives, we expect the geometry of $\partial_0 \overline{\mathcal{M}}_{\textup{co-def}}(X)$ depends on whether $Z$ is locally removable and therefore by \autoref{t1.1} on the parity of the invariant $\mu$.

\begin{conj}\label{c4.3}
Let $X_\pm$ be smooth, compact coassociative submanifolds of a torsion-free $G_2$-manifold $Y$. Suppose that $X_\pm$ are simply-connected and that their intersection $Z$ is connected. Let $X\subset Y$ denote the smooth coassociative obtained by resolving $X_+\cup X_-$ as in \cite{G25}. If $Z$ is locally removable, then $X$ admits a one-parameter family of coassociative deformations to a conically-singular coassociative with isolated singularity modeled on the Lawson-Osserman cone.
\end{conj}

We conclude with some evidence for \autoref{c4.3} in the case when $X_\pm$ are rigid, and thus $Z$ is assumed to be removable along a family of deformations $X_\pm(t)$ that are coassociative with respect to a family of $G_2$ structures $\varphi_t$. Let $X_\pm=S^4$. It is shown in \cite{G25} that $X$ is diffeomorphic to the generalized connected sum from \autoref{l03.1}. Since $Z$ is removable, \autoref{t1.1} implies $\mu(Z)=-1$. Thus, $X$ is diffeomorphic to $\mathbb{CP}^2\#\overline{\mathbb{CP}}^2$ by \autoref{t1.2}, so $\mathcal{M}_{\textup{co-def}}(X)$ is one-dimensional by \autoref{t04.1}. In light of \autoref{e04.3}, we expect $\overline{\mathcal{M}}_{\textup{co-def}}(X)$ should be diffeomorphic to a closed interval, with one endpoint corresponding to $X_0=X_+\cup X_-$. 

What about the other endpoint? If, consistent with \autoref{c4.3}, $X$ did admit a degeneration to a coassociative $X'$ with isolated singularity modeled on the Lawson-Osserman cone, then one could use the gluing construction of Lotay \cite{L14} to desingularize $X'$ by gluing in the Harvey-Lawson coassociative $M^+$. Gluing in $M^+$ has the effect topologically of taking the connected sum of $X'$ with $\overline{\mathbb{CP}}^2$. On the other hand, any coassociative $X'=\mathbb{CP}^2$ with an isolated singularity modeled on the Lawson-Osserman cone is rigid as a conically-singular coasociative \cite{L07}\cite{L12} and has a one-dimensional moduli space of coassociative smoothings \cite{L14}. Thus, everything is consistent with our dimension count and our calculation of the diffeomorphism type of $X$.

If \autoref{c4.3} were correct in this rigid case, then  $\mathcal{M}_{\textup{co-def}}(X)$ would be diffeomorphic to $(0,\infty)$ and we could view the scale $\tau$ as giving a global coordinate on the moduli space in the following way. We can think of $\tau$ as the size of the central $2$-sphere of the Lawlor neck, which shrinks to zero at $\tau\rightarrow 0^+$. Inspecting the pre-gluing construction in \cite{G25}, one sees that this $2$-sphere represents a nonzero homology class $\alpha\in H_2(X_\tau)$. On the other hand, the exceptional $2$-sphere in $\mathbb{CP}^2\#\overline{\mathbb{CP}}^2$, which shrinks to zero as one approaches the right endpoint of the interval in \autoref{f08}, represents another nonzero homology class $\beta\in H_2(X_\tau)$. Together, $\alpha$ and $\beta$ span $H_2(X_\tau)$ and satisfy $\alpha\cdot\beta=\pm 1$. Thus, approaching the right endpoint in \autoref{f08} indeed corresponds to the limit $\tau\rightarrow +\infty$.

\begin{figure}[h]
\begin{center}
\begin{tikzpicture}[decoration={
    markings,
    mark=at position 0.5 with {\arrow{>}}}
    ]
\node[dot,label=below right:$\mathbb{CP}^2$] at (2,-5) {};
\draw[thick] (-7.5,-5) to (2,-5);
\node[dot,label=below left:$S^4\cup_{S^1} S^4$] at (-7.5,-5) {};
\node[dot,label=above:{$X=X_\tau\cong S^4\#_{S^1} S^4$}] at (-4,-5) {};
\end{tikzpicture}
\captionof{figure}{The compactification of the moduli space $\mathcal{M}_{\textup{co-def}}(S^4\#_{S^1}S^4)$ when $S^1$ is locally removable predicted by \autoref{c4.3}.}
\label{f08}
\end{center}
\end{figure}

\begin{rem}
One can regard the phenomenon predicted by \autoref{c4.3} as a sort of ``coassociative blow-down'' as \autoref{c4.3} says essentially that the $2$-sphere with self-intersection $-1$ in $X$ formed by the discs along which the circle $Z$ is removed can be blown-down through a family of coassociative deformations of $X$. Note that if $Z$ is locally removable in a family $X_\pm(t)$, then we can perform the generalized connected sum construction of \cite{G25} along each of the intersection circles $Z(t)$, recalling \autoref{d02.13}, and this gives a one-parameter family $X(t)$ of coassociative deformations of the smooth coassociative $X$ resolving $X_+\cup X_-$. Thus, the point of \autoref{c4.3} is to show that in the limit this family $X(t)$ of smooth coassociatives degenerates to a singular coassociative with precisely a Lawson-Osserman cone singularity.
\end{rem}

Note \autoref{t1.1} implies the intersection circle in \autoref{c4.3} has sign $\mu(Z)=-1$. It is plausible the invariant $\mu$ could be used to rule out certain possible non-compactness phenomena for coassociatives. We hope to investigate this further in a future work.


\begin{thebibliography}{99}
\bibitem{ADK}
D. Auroux, S. K. Donaldson, and L. Katzarkov, \emph{Singular Lefschetz pencils}, Geom. Topol. \textbf{8} (2005), 1043--1114.

\bibitem{B05}
R. L. Bryant, \emph{Some remarks on $G_2$-structures}, Proceedings of the Gokova Geometry-Topology Conference, edited by S. Akbulut, T. Önder, and R. J. Stern (2006), International Press, 75-109.

\bibitem{B04}
A. Butscher, \emph{Regularizing a singular special Lagrangian variety}, Communications in Analysis and Geometry \textbf{12(4)} (2004), 733–791.

\bibitem{D17}
S. K. Donaldson, \emph{Adiabatic limits of co-associative Kovalev–Lefschetz fibrations}, Algebra, Geometry, and Physics in the 21st Century, Progress in Mathematics \textbf{324} (2017), 1-29.

\bibitem{DS11}
S. K. Donaldson and E. Segal, \emph{Gauge theory in higher dimensions, II.}, Surveys in Differential Geometry \textbf{XVI} (2005), 1-41.

\bibitem{F07}
D. Fox, \emph{Coassociative cones that are ruled by 2-planes}, Asian J. Math. \textbf{11} (2007), 535–554.

\bibitem{G25}
D. Galt, \emph{A generalized connected sum construction for compact coassociative $4$-folds}, in preparation.

\bibitem{HL82}
R. Harvey and H. B. Lawson, \emph{Calibrated Geometries}, Acta Math. \textbf{148} (1982), 47–152.

\bibitem{J00}
D. D. Joyce, \emph{Compact manifolds with special holonomy}, Oxford Mathematical Monographs, Oxford University Press (2000).

\bibitem{J03}
D. D. Joyce, \emph{Special Lagrangian submanifolds with isolated conical singularities. V. Survey and applications}, J. Diff. Geom. \textbf{63(2)} (2003), 279–347.

\bibitem{J03I}
D. D. Joyce, \emph{Special Lagrangian submanifolds with isolated conical singularities. I. Regularity}, Annals of Global Analysis and Geometry \textbf{25} (2003), 301–352.

\bibitem{J04I}
D. D. Joyce, \emph{Special Lagrangian submanifolds with isolated conical singularities. II. Moduli spaces}, Annals of Global Analysis and Geometry \textbf{25(4)} (2004), 301-352.

\bibitem{J04}
D. D. Joyce, \emph{Special Lagrangian submanifolds with isolated conical singularities. III. Desingularization, the unobstructed case}, Annals of Global Analysis and Geometry \textbf{26(1)} (2004).

\bibitem{J04II}
D. D. Joyce, \emph{Special Lagrangian submanifolds with isolated conical singularities. IV. Desingularization, obstructions and families}, Annals of Global Analysis and Geometry \textbf{26(2)} (2004), 117-174.

\bibitem{KM99}
R. Kirby and P. Melvin, \emph{Canonical framings for $3$-manifolds}, Proceedings of the 6th Gökova Geometry-Topology Conference, Tr. J. Mathematics \textbf{23} (1999), 89-115.

\bibitem{KT91}
R. C. Kirby and L. R. Taylor, \emph{Pin structures on low-dimensional manifolds}, In: S. K. Donaldson, C. B. Thomas (eds). Geometry of Low-Dimensional Manifolds: Symplectic Manifolds and Jones-Witten Theory. London Mathematical Society Lecture Note Series. Cambridge University Press, (1991), 177-242.

\bibitem{K07}
A. A. Kosinski, \emph{Differential Manifolds}, Dover Books on Mathematics, Dover Publications, Inc. (2007).

\bibitem{LO77}
H. B. Lawson, Jr. and R. Osserman, \emph{Non-existence, non-uniqueness and irregularity of solutions to the
minimal surface system}, Acta Math. \textbf{139} (1977), 1–17.

\bibitem{L03}
Y.-I. Lee, \emph{Embedded special Lagrangian submanifolds in Calabi–Yau manifolds}, Communications in Analysis and Geometry \textbf{11(3)} (2003), 391-423.

\bibitem{L07}
J. D. Lotay, \emph{Coassociative 4-folds with conical singularities}, Communications in Analysis and Geometry \textbf{15} (2007), 891-946.

\bibitem{L09}
J. D. Lotay, \emph{Desingularization of coassociative $4$-folds with conical singularities}, Geometric and Functional Analysis \textbf{18(6)} (2008), 2055-2100.

\bibitem{L090}
J. D. Lotay, \emph{Deformation theory of asymptotically conical coassociative 4-folds}, Proceedings of the London Mathematical Society \textbf{99(3)} (2009), 386-424.

\bibitem{L12}
J. D. Lotay, \emph{Stability of coassociative conical singularities}, Communications in Analysis and Geometry \textbf{20} (2012), 803-867.

\bibitem{L14}
J. D. Lotay, \emph{Desingularization of coassociative 4-folds with conical singularities: obstructions and applications}, Transactions of the American Mathematical Society \textbf{366(11)} (2014), 6051-6092.

\bibitem{LK17}
J. D. Lotay and N. Kapouleas \emph{Invariant coassociative 4-folds via gluing}, Meeting of the Simons Collaboration on Special Holonomy in Geometry, Analysis, and Physics (2017).

\bibitem{MS10}
T. B. Madsen and A. Swann, \emph{Multi-moment maps}, Advances in Mathematics \textbf{229(4)} (2012), 2287-2309.

\bibitem{M02}
S. P. Marshall, \emph{Deformations of special Lagrangian submanifolds}, DPhil Thesis, University of Oxford (2002).

\bibitem{M98}
R. C. McLean, \emph{Deformations of calibrated submanifolds}, Communications in Analysis and Geometry \textbf{6} (1998), 705–747.

\bibitem{P07}
T. Perutz, \emph{Zero-sets of near-symplectic forms}, Journal of Symplectic Geometry \textbf{4(3)} (2007), 237-257.

\bibitem{T}
C. H. Taubes, \emph{Seiberg-Witten invariants and pseudo-holomorphic subvarieties
for self-dual, harmonic $2$-forms}, Geom. Topol. \textbf{3} (1999), 167--210.

\bibitem{W78}
G. W. Whitehead \emph{Elements of Homotopy Theory}, Springer-Verlag (1978).


\end{thebibliography}
\end{document}